\newtheorem{theorem}{Theorem}[section]
\newtheorem{lemma}[theorem]{Lemma}
\newtheorem{proposition}[theorem]{Proposition}
\newcommand{\F}{\mathbb F}
\title{The $R$-transform as power map and its generalisations to higher degree}
\author{Alp Bassa  $^\P$}
\address{Alp Bassa. Bo\u{g}azi\c{c}i University,
Faculty of Arts and Sciences,
Department of Mathematics,
34342 Bebek, \.{I}stanbul,
Turkey,
}
\email{alp.bassa@boun.edu.tr}
\author{Ricardo Menares $^\dag$}
\address{Ricardo Menares. Ricardo Menares. Pontificia Universidad Cat\'olica de Chile,  Facultad de Matem\'aticas, Vicu\~na Mackenna 4860, Santiago, Chile.}
\email{rmenares.v@gmail.com}
\thanks{MSC: 12Y05}
\thanks{$\P$ Bo\u gazi\c ci University}
\thanks{\dag Pontificia Universidad Cat\'olica de Chile}
\begin{document}

\begin{abstract}
We give iterative constructions for irreducible polynomials over $\mathbb F_q$ of degree $n\cdot t^r$ for all $r\geq 0$, starting from irreducible polynomials of degree $n$. The iterative constructions correspond modulo fractional linear transformations to compositions with power functions $x^t$. The $R$-transform introduced by Cohen is recovered as a particular case corresponding to $x^2$, hence we obtain a generalization of Cohen's $R$-transform ($t=2$) to arbitrary degrees $t\geq 2$. Important properties like self-reciprocity and invariance of roots under certain automorphisms are deduced from invariance under multiplication by appropriate roots of unity. Extending to quadratic extensions of $\mathbb F_q$ we recover and generalize a  recursive construction of Panario, Reis and Wang.
\end{abstract} 
\maketitle
\tableofcontents

\section{Introduction}
The primary way of constructing irreducible polynomials of high degree over a finite field $\mathbb F_q$ is to start with an irreducible polynomial $f$ and to repeatedly apply a given transformation, which generally amounts to composition with a fixed polynomial or rational function. This way an infinite sequence of polynomials over $\mathbb F_q$ of increasing degree is obtained. Characterising the polynomials $f$, for which the members of the resulting sequence of polynomials are all irreducible and satisfy further desirable properties has become a highly non-trivial and important question. There are many works on the different aspects of this problem, including  \cite{AAK}, \cite{Agou}, \cite{Cohen82}, \cite{Cohen92}, \cite{recurrent}, \cite{recurrent2}, \cite{Varshamov}. 

When $q$ is odd, one of the most important transformations for the construction of irreducible polynomials is the $R$-transform introduced by Cohen (\cite{Cohen92}), which corresponds to composition with the rational function
$$\frac{1}{2}\left(x+\frac{1}{x}\right),$$ 
attaching to a polynomial $g(x)\in \mathbb F_q[x]$ the polynomial
$$g^R(x)=(2x)^{\deg g}\cdot g\left(\frac{1}{2}\left(x+\frac{1}{x}\right)\right).$$
Cohen gives the following characterisation of polynomials $g$ for which the members of the resulting sequence are irreducible:
\begin{theorem}[\cite{Cohen92}] \label{theorem_cohen}
Suppose $q$ is odd and let $g(x) \in \mathbb F_q[x]$ be a monic irreducible polynomial. Assume that $g(-1)\cdot g(1)$ is not a square in $\mathbb F_q$.  If $q \equiv 3 \mod 4$, assume moreover that $\deg g$ is even. Consider the sequence of polynomials $(g_m(x))_{m\geq 0}$ in $\mathbb F_q[x]$ defined by $g_0(x)=g(x)$ and  $$g_m(x)=g^R_{m-1}(x), \quad m \geq 1.$$
Then, $g_m(x)$ is an irreducible polynomial of degree $2^m\deg g$ for all $m$. 
\end{theorem}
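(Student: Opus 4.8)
The plan is to read the $R$-transform as a passage to a quadratic extension and then to show, by a norm computation, that Cohen's hypothesis reproduces itself under the transform. Write $\phi(x)=\frac{x^2+1}{2x}$, so that with $n=\deg g$ one has $g^R(x)=(2x)^n g(\phi(x))$. A direct inspection of the leading term (the top term of $(2x)^n\phi(x)^n=(x^2+1)^n$) shows $g^R$ is monic of degree $2n$. If $\alpha\in\F_{q^n}$ is a root of $g$, then the roots of $g^R$ are exactly the solutions $\beta$ of $\phi(\beta)=\alpha$, i.e. of $x^2-2\alpha x+1=0$, as $\alpha$ runs over the roots of $g$. I would first record the composition criterion: since $\alpha=\phi(\beta)\in\F_q(\beta)$, the field $\F_q(\beta)$ contains $\F_q(\alpha)=\F_{q^n}$, so $[\F_q(\beta):\F_q]$ is a multiple of $n$ and equals $2n$ exactly when $x^2-2\alpha x+1$ is irreducible over $\F_{q^n}$; in that case $\beta$ has degree $2n=\deg g^R$, so $g^R$ is its minimal polynomial and hence irreducible. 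Everything thus reduces to the irreducibility of this quadratic.

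The quadratic $x^2-2\alpha x+1$ has discriminant $4(\alpha^2-1)$, so it is irreducible over $\F_{q^n}$ iff $\alpha^2-1$ is a nonsquare in $\F_{q^n}$. The key tool is the norm criterion for squares: for odd $q$ and $\gamma\in\F_{q^n}^\ast$, the formula $N_{\F_{q^n}/\F_q}(\gamma)=\gamma^{(q^n-1)/(q-1)}$ gives $N_{\F_{q^n}/\F_q}(\gamma)^{(q-1)/2}=\gamma^{(q^n-1)/2}$, so by Euler's criterion $\gamma$ is a square in $\F_{q^n}$ iff $N_{\F_{q^n}/\F_q}(\gamma)$ is a square in $\F_q$. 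Applying this to $\gamma=\alpha^2-1$ and computing the norm as a product over the conjugates $\alpha_i$ of $\alpha$,
\[
N_{\F_{q^n}/\F_q}(\alpha^2-1)=\prod_i(\alpha_i-1)\prod_i(\alpha_i+1)=g(1)\,g(-1),
\]
the two sign factors $(-1)^n$ cancelling. This yields the single-step statement: \emph{$g^R$ is irreducible of degree $2n$ if and only if $g(1)g(-1)$ is a nonsquare in $\F_q$.}

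It remains to run the induction, for which I must check that the nonsquare hypothesis reproduces itself. Evaluating the transform gives $g^R(1)=2^n g(1)$ and $g^R(-1)=(-2)^n g(-1)$, hence
\[
g^R(1)\,g^R(-1)=(-4)^n\,g(1)g(-1)=(-1)^n\,(2^n)^2\,g(1)g(-1).
\]
Since $(2^n)^2$ is a square, the nonsquare property transfers from $g(1)g(-1)$ to $g^R(1)g^R(-1)$ exactly when $(-1)^n$ is a square in $\F_q$. If $q\equiv 1\pmod 4$ then $-1$ is a square and the condition propagates for every degree, so no parity assumption is needed. If $q\equiv 3\pmod 4$ then $(-1)^n$ is a square iff $n$ is even; the hypothesis that $\deg g$ is even secures the first step, and because $\deg g_1=2n$ is even, every subsequent degree is even and the condition then propagates automatically. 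An induction on $m$ using the single-step statement gives that each $g_m$ is monic irreducible of degree $2^m\deg g$.

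The genuinely substantive points are the norm criterion for squares together with the clean evaluation $N(\alpha^2-1)=g(1)g(-1)$, and the verification in the composition criterion that $\beta$ attains the full degree $2n$ rather than collapsing into a proper subfield; these carry the real content. The rest—monicity, the degree count, and the parity bookkeeping in the $q\equiv 3\pmod 4$ case—is routine once these are in place. I would also note at the outset that the hypothesis that $g(1)g(-1)$ is not a square forces $g(\pm1)\neq 0$, hence $\alpha\neq\pm1$ and $\beta\neq\pm1$, so no degenerate roots arise.
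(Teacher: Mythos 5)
Your argument is correct, but it follows a genuinely different route from the paper's. The paper never proves Cohen's theorem directly: it deduces it from the general Theorem \ref{mainthm} by observing that $g^R = g^{R_{\sigma^*,2}}$ for $\sigma^*=\left[\begin{array}{rr} 1 & 1\\ 1 & -1\end{array}\right]$, i.e.\ that after the coordinate change by $\frac{x+1}{x-1}$ the $R$-transform becomes composition with $x^2$; the irreducibility of the whole sequence is then reduced to that of the first iterate alone (Theorem \ref{general}, resting on the classical criterion of Theorem \ref{menezestheorem} for $f(x^t)$), and the first iterate is handled by showing that the relevant norm equals $\eta(g;\sigma^*)=g(1)\cdot g(-1)^{-1}$. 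You instead argue directly on the quadratic $x^2-2\alpha x+1$ satisfied by the roots of $g^R$ over those of $g$: irreducibility is read off from the discriminant $4(\alpha^2-1)$, converted to a condition over $\F_q$ via the norm criterion for squares (for $q$ odd, $\gamma$ is a square in $\F_{q^n}$ iff ${\rm N}_{\F_{q^n}/\F_q}(\gamma)$ is a square in $\F_q$) together with the clean identity ${\rm N}(\alpha^2-1)=g(1)g(-1)$; and the induction is closed by the explicit evaluation $g^R(\pm 1)=(\pm 2)^n g(\pm 1)$ (using that $\pm1$ are the fixed points of $\frac{1}{2}(x+1/x)$), which shows the nonsquare hypothesis reproduces itself up to the factor $(-1)^n$ --- exactly accounting for the extra parity assumption when $q\equiv 3 \bmod 4$. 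All the steps check out, including the verification that $\beta$ attains full degree $2n$ and the exclusion of the degenerate cases $\alpha=\pm1$. Your route is elementary, self-contained, and specific to $t=2$, and it makes transparent why the hypothesis is precisely that $g(1)g(-1)$ be a nonsquare; the paper's route invokes heavier machinery (the power-map criterion and the conjugation formalism) but generalizes to arbitrary $t$ and arbitrary fractional linear coordinate changes, and explains structurally why the hypothesis need only be checked at the first step rather than repropagated at each stage as in your induction.
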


This construction is particularly important from the complexity point of view, since the resulting polynomials are self-reciprocal. Moreover, for $q\equiv 1 \mod 4$, starting with a linear polynomial satisfying the conditions of Theorem~\ref{theorem_cohen}, we obtain a sequence of normal (see \cite{meyn}), and in fact even completely normal (see \cite{chapman}) polynomials over $\mathbb F_q$. 

In this paper, we start from the observation that the $R$-transform, after an appropriate change of coordinates by a fractional linear transformation, corresponds to composition with the square map $x^2$. This fact had been previously obtained by Ugolini (\cite{U14}, \cite{U15}), see also  \cite{chapman}. Then, we replace the square map by more general power maps  $x^t$, for some  fixed integer $t\geq 2$, and take the rational function to be an appropriate change of coordinates by a fractional linear transformation. This approach leads to simple conditions ensuring the irreducibility of the resulting sequence and symmetry properties akin to being self-reciprocal (cf. Theorem \ref{mainthm}, Theorem \ref{ThmMcNay} and Theorem \ref{ThmPanarioetal}).  Indeed, consider the sequence given by $g_0(x)=g(x)$ and $g_m(x)=g_{m-1}(x^t)$ for $m \geq 1$.  Using classical facts on Kummer extensions it is possible to identify very simple hypothesis on $g(x)$ and $t$ ensuring that all polynomials in the resulting sequence are irreducible, see Section \ref{power} for precise statements and references. Also, the set of  roots of $g_m(x)=g(x^{t^m})$ is invariant under multiplication by $t^m$-roots of unity and this feature leads to symmetry properties of the polynomials.

In order to describe the new transforms that arise, we introduce some notations. Consider the group ${\rm GL}_2(\mathbb F_q)$ of $2 \times 2$ matrices 
$$\sigma = \left[ {\begin{array}{cc}
   a & b \\       c & d \\      \end{array} } \right]  \in M_2(\mathbb F_q)$$
   such that $ad-bc \neq 0$. This group acts on $\hat{\mathbb F}_q := \overline{\mathbb F}_q\cup \{\infty\}$ by M\"obius transformations, as
    $$\alpha \in \overline{\mathbb{F}}_q, \quad \sigma \cdot \alpha =\left\{ \begin{array}{cl}
  \frac{a\alpha +b}{c\alpha+d} & \textrm{ if } c\alpha + d \neq 0\\
  \infty & \textrm{ otherwise }
  \end{array}\right., \qquad \sigma \cdot \infty =\left\{ \begin{array}{cl}
  \frac{a}{c} & \textrm{ if } c\neq 0\\
  \infty & \textrm{ otherwise. }
  \end{array}\right.  $$
  
For any degree $n$ polynomial $f(x)$, we set $f(\infty):=\infty$ and denote by $P_\sigma(f)(x)$ the polynomial 

\begin{equation}\label{Psigma}
P_\sigma(f) (x) := (cx+d)^n f \left( \frac{ax+b}{cx+d}\right).
\end{equation}

The works \cite{Garefalakis}, \cite{ST} have shown fruitful, with regards to the construction of irreducible poylnomials, to consider the rule \eqref{Psigma} as an action of ${\rm GL}_2(\mathbb F_q)$ on appropriate classes of polynomials. In particular, much effort has been  brought to the characterization of polynomials which are invariant under subgroups of  ${\rm GL}_2(\mathbb F_q)$, e.g. see \cite{MP}, \cite{Panarioetal}, \cite{Reis}, \cite{Reis2} and \cite{S}.

 For any positive integer $t$, let $S_t :   \mathbb F_q[x] \rightarrow \mathbb F_q[x]$ be the linear map given by $S_t(f)(x):=f(x^t).$ Then, we define a transformation $R_{\sigma,t}$ as follows. For any nonzero polynomial $g(x) \in \mathbb F_q[x]$ of degree $n$,  we define the polynomial $g^{R_{\sigma,t}}(x) \in \mathbb F_q[x]$ by $$ g^{R_{\sigma,t}}(x):= P_{\sigma^{-1}} \circ S_t \circ P_{\sigma}(g)(x).$$

Whenever $g(a/c) \neq 0$, we define an element $\eta(g;\sigma) \in \mathbb F_q$ by 
$$\eta(g;\sigma):=(\sigma^{-1}\cdot \infty)^n \cdot \frac{g(\sigma\cdot 0)}{g(\sigma \cdot \infty)} =  \left(-\frac{d}{c}\right)^n \cdot g\left(\frac{b}{d}\right)\cdot g\left(\frac{a}{c}\right)^{-1},$$  with  the convention that  $\eta(g;\sigma)=(-d/a)^n \cdot g(b/d)$ for $c=0$ and $\eta(g;\sigma)=(-b/c)^n\cdot g\bigl(a/c\bigr)^{-1}$ for $d=0$.
   
Our main result is a  generalization of Theorem \ref{theorem_cohen} for the transforms $R_{\sigma,t}$. 
   
\begin{theorem} \label{mainthm}
Consider a finite field $\mathbb F_q$. Let $t\geq 2$ be an integer such that every prime factor of $t$ divides $q-1$. Let  $\sigma=\left[ {\begin{array}{cc}
   a & b \\       c & d \\      \end{array} } \right]\in {\rm GL}_2(\mathbb F_q)$.  Let $g(x)\neq x-a/c$ be a monic irreducible polynomial in $\mathbb F_q[x]$ of degree $n$. If $q\equiv 3 \mod 4$ and $t$ is even assume moreover that $n$ is even.

Assume that,  for all prime numbers $\ell|t$, the element $\eta(g;\sigma)$ is not an $\ell$-th power in $\mathbb F_q$. Define $g_0=g$ and let $g_m=g_{m-1}^{R_{\sigma,t}}$ for $m\geq 1$. Then $(g_m)_{m\geq 0}$ forms an infinite sequence of irreducible polynomials, with $\deg g_m=t^m\cdot n$.

Moreover, for $m\geq 0$ let $\zeta$ be a $t^m$-th root of unity. Then, the set of roots of $g_m$ is invariant under the action of the order $t^m$ matrix in ${\rm GL}_2\big(\mathbb{F}_q(\zeta)\big)$ given by
\begin{equation} \label{thematrix} 
M_{\sigma, \zeta}=\left[ {\begin{array}{cc}
\zeta ad-bc & (1-\zeta)ab \\ (\zeta-1)cd & ad-\zeta bc\\ \end{array} } \right].
\end{equation}

\end{theorem}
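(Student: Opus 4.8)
The plan is to reduce the entire statement to the elementary behaviour of the power map $y\mapsto y^{t^m}$ by conjugating with the fractional linear transformation $\sigma$. First I would record the root-theoretic description of the two building blocks: by \eqref{Psigma}, the substitution operator $P_\sigma$ sends a polynomial with root set $\{\beta_i\}$ to one with root set $\{\sigma^{-1}\cdot\beta_i\}$, while $S_t$ replaces each root $\gamma$ by the full set of its $t$-th roots. Combined with the multiplicativity $S_t\circ S_{t^{m-1}}=S_{t^m}$ and the cocycle relation $P_\sigma\circ P_{\sigma^{-1}}=\mathrm{id}$ up to a nonzero scalar (Proposition \ref{matac}), an induction on $m$ yields the key reduction
$$g_m=\lambda_m\,P_{\sigma^{-1}}\circ S_{t^m}\circ P_\sigma(g),\qquad \lambda_m\in\mathbb F_q^\times,$$
so that, writing $h:=P_\sigma(g)$, the set of roots of $g_m$ is exactly $\{\sigma\cdot\delta:\ \delta^{t^m}=\sigma^{-1}\cdot\beta_i\}$.

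Since $P_\sigma$ and $P_{\sigma^{-1}}$ preserve irreducibility and, under the standing hypothesis that $\sigma\cdot\infty=a/c$ never occurs as a root, also preserve degree, irreducibility of $g_m$ is equivalent to irreducibility of $h(x^{t^m})$. After normalising $h$ to be monic, a short computation gives $\prod_i(\sigma^{-1}\cdot\beta_i)=\eta(g;\sigma)$, so the product of the roots of $h$, equivalently the norm $N_{\mathbb F_{q^n}/\mathbb F_q}(\gamma)$ of a single root $\gamma$, equals $\eta(g;\sigma)$. I would then apply the classical power-map criterion of Section \ref{power} to $h(x^{t^m})$: since every prime factor of $t^m$ divides $q-1$, irreducibility holds provided $\gamma$ is not an $\ell$-th power in $\mathbb F_{q^n}^\times$ for each prime $\ell\mid t$, together with $q^n\equiv 1\pmod 4$ when $4\mid t^m$. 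For $\ell\mid q-1$ the norm induces an isomorphism $\mathbb F_{q^n}^\times/(\mathbb F_{q^n}^\times)^\ell\xrightarrow{\ \sim\ }\mathbb F_q^\times/(\mathbb F_q^\times)^\ell$, so ``$\gamma$ not an $\ell$-th power'' is exactly the hypothesis ``$\eta(g;\sigma)$ not an $\ell$-th power in $\mathbb F_q$''; and the parity hypothesis (forcing $n$ even when $q\equiv 3\bmod 4$) guarantees $q^n\equiv 1\pmod 4$, which handles the $4\mid t^m$ case. This yields irreducibility of $g_m$ of degree $t^m n$, and since applying the argument at each stage shows the degree never drops, the sequence is infinite.

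For the invariance statement, in the coordinate $y=\sigma^{-1}\cdot x$ the root set of $g_m$ is the union of fibres $\{\delta:\delta^{t^m}=\sigma^{-1}\cdot\beta_i\}$, which is visibly stable under $y\mapsto\zeta y$ for any $t^m$-th root of unity $\zeta$. Transporting this symmetry back through $\sigma$, the root set of $g_m$ is stable under the fractional linear map attached to $\sigma\,\mathrm{diag}(\zeta,1)\,\sigma^{-1}$; a direct multiplication shows $\sigma\,\mathrm{diag}(\zeta,1)\,\sigma^{-1}=(\det\sigma)^{-1}M_{\sigma,\zeta}$, and since scalar matrices act trivially on $\hat{\mathbb F}_q$ this is the action of $M_{\sigma,\zeta}$ from \eqref{thematrix}. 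As conjugation preserves order in $\mathrm{PGL}_2$, the matrix $M_{\sigma,\zeta}$ has order equal to that of $\zeta$.

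The root descriptions and the matrix conjugation are routine; the main obstacles are two bookkeeping points. First, making the reduction $g_m=\lambda_m\,P_{\sigma^{-1}}\circ S_{t^m}\circ P_\sigma(g)$ rigorous requires carefully tracking the scalar factors produced by the cocycle property and verifying degree preservation at every iteration — for $m\ge 1$ this follows because $g_{m-1}$ is irreducible of degree $\ge 2$ and so cannot vanish at $a/c\in\mathbb F_q$, while the base case uses precisely the hypothesis $g\neq x-a/c$. Second, and more delicate, is the arithmetic translation of the power-map criterion: establishing the norm/non-power equivalence above and, crucially, correctly isolating when the mod-$4$ obstruction actually appears (only via the $4\mid t^m$ case, i.e. when $t$ is even and $m\ge 2$) and checking that the stated parity hypothesis is exactly what neutralises it.
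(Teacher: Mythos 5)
Your proposal is correct and follows essentially the same route as the paper: conjugate by $\sigma$ to reduce the iteration to $f_0(x^{t^m})$ with $f_0=P_\sigma(g)$, apply the classical power-map irreducibility criterion, and translate the non-$\ell$-th-power condition through the norm via $\beta^{(q^n-1)/\ell}=\bigl({\rm N}_{\mathbb F_{q^n}/\mathbb F_q}(\beta)\bigr)^{(q-1)/\ell}$ together with ${\rm N}_{\mathbb F_{q^n}/\mathbb F_q}(\beta)=\eta(g;\sigma)$, which is exactly the paper's argument, and your conjugation $\sigma\,\mathrm{diag}(\zeta,1)\,\sigma^{-1}=(\det\sigma)^{-1}M_{\sigma,\zeta}$ matches the stated matrix. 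The one bookkeeping point you misattribute is the $n=1$ base case: the possible degree drop there comes from $S_t(f_0)$ vanishing at $\sigma^{-1}\cdot\infty$ (not from $g$ vanishing at $a/c$, which $g\neq x-a/c$ already excludes), and it is ruled out by the hypothesis on $\eta(g;\sigma)$, since that root would force $\eta(g;\sigma)$ to be the $t$-th power $(-d/c)^t$ --- precisely the content of the paper's first Claim.
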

   
 In the  work \cite{Aravena}, sufficient conditions on the starting polynomial are established in order to ensure that the polynomials  constructed in the previous statement are completely  normal, thus generalizing  \cite[Theorem 1]{chapman} to this context.

Assume $q$ is odd. Set $\sigma^* :=  \left[ {\begin{array}{rr}
   1 & 1 \\      1 & -1 \\      \end{array} } \right] \in GL_2(\mathbb F_q) $. Then, whenever $g(1)\neq 0$, we have that $g^{R_{\sigma^*,2}} = g^R$. This shows that the $R$-transform, after a coordinate change by the fractional linear transformation $\frac{x+1}{x-1}$, corresponds to the composition with the power function $x^2$.  Moreover, taking $\zeta=-1$,  the action of $M_{\sigma^*,-1}$ is given by  $\alpha \mapsto \frac{1}{\alpha}$.  Thus, the stated invariance of the roots boils down to the polynomial being self-reciprocal, after being normalized to be monic. Also, we have that $\eta(\sigma^*;g)=g(1)\cdot g(-1)^{-1}$ differs by a square from $g(1)\cdot g(-1)$, thus showing that Theorem \ref{theorem_cohen} is implied by Theorem \ref{mainthm}.


With regards to other remarkable transforms that can be found in the literature, in order to realize them as a power map, sometimes it is necessary to perform a change of variables in a proper extension $\mathbb F_{q'}/\mathbb F_q$. More precisely, if $\sigma \in GL_2(\mathbb F_{q'})$ and $t \geq 2$ are such that for any polynomial $g(x) \in \mathbb F_q[x]$, a non zero constant multiple of the polynomial $g^{R_{\sigma,t}}(x)$ has coefficients in the base field $\mathbb F_q$, then Theorem \ref{mainthm} can be used to obtain results about the irreducibility over $\mathbb F_q$ of the iterates. In Section \ref{quadratic} we work out the details in the quadratic case $q'=q^2$ and apply this discussion to McNay's transform (see \cite{chapman}, section 3), see Theorem \ref{ThmMcNay} below.  We also apply these methods to a transform introduced by Panario, Reis and Wang in \cite{Panarioetal} which is related to Singer subgroups of $GL_2(\mathbb F_q)$. In particular,  Theorem \ref{ThmPanarioetal} below provides a partial answer to Problem 1 in \emph{loc. cit.}, Section 5.

Our proof of Theorem \ref{mainthm} begins by relating the given sequence to an appropriate sequence of composition with power maps. A technical issue comes from the fact that  $\deg  P_\sigma(f)=\deg f$ if and only if $f(\sigma\cdot \infty) \neq 0$ (see Proposition \ref{matac} i) below). For this reason, it is customary to consider the action of $GL_2(\mathbb F_q)$ on the space of polynomials that do not vanish on $\mathbb{F}_q$ in order to obtain a genuine group action. When dealing with nonlinear irreducible polynomials, as in most of the aforementioned works, this feature holds. In this work we do not want to rule out linear polynomials, as they can be useful at the start of the sequence, and hence we must take into account a possible defect on the rule \eqref{Psigma} to be a group action. This is of course a minor technical issue, that nevertheless forces us to do some extra bookkeeping (cf. Lemma \ref{pia}).

We observe that the irreducibility of $g_1$ implies the irreducibility of the whole sequence (see  Theorem \ref{general}). Then, we use irreducibility criteria classically known. In the same vein, the roots of $f(x^t)$ are invariant under multiplication by $t$-roots of unity. This is used to show that the set of roots of the polynomials obtained by iterating the transform $R_{\sigma,t}$ are invariant under the action of the cyclic subgroups of ${\rm GL}_2(\mathbb F_q)$ stated in Theorem \ref{mainthm}.

{\bf Acknowledgements}. Alp Bassa was partially supported by the BAGEP Award of the Science Academy with funding supplied by Mehve{\c s} Demiren in memory of Selim Demiren and by Bo\u gazi\c ci University Research Fund Grant Number 15B06SUP2 and by Conicyt-MEC 80130064 grant. Ricardo Menares was partially supported by  FONDECYT 1211858  grant.



\section{Composition with power functions}\label{power}
Fix a finite field $\mathbb F_q$.  For an irreducible polynomial $f(x)\in \mathbb F_q[x]$, it is precisely known, for which positive integers $t$, the polynomial $f(x^t)$ is irreducible. We have the following result
\begin{theorem}[\cite{Cohen69}, Theorem 1] \label{menezestheorem}
Let $f(x)\in \mathbb F_q[x]$ be an irreducible polynomial of degree $n$ and exponent $e$ (equal to the order of any root of $f(x)$). For a positive integer $t$ the polynomial $f(x^t)$ is irreducible over $\mathbb F_q$ if and only if
\begin{enumerate}
\item ${\rm gcd}(t,(q^n-1)/e)=1$,
\item each prime factor of $t$ divides $e$, and
\item if $4|t$ then $4|(q^n-1)$.
\end{enumerate}
\end{theorem}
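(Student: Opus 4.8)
The plan is to reduce the irreducibility of $f(x^t)$ over $\mathbb{F}_q$ to the irreducibility of a single binomial over the splitting field of $f$, and then to translate the classical binomial criterion into the three stated conditions. Fix a root $\alpha$ of $f$ in $\overline{\mathbb{F}}_q$; since $f$ is irreducible of degree $n$ we have $\mathbb{F}_q(\alpha)=\mathbb{F}_{q^n}$, and by hypothesis $\alpha$ has multiplicative order $e$, so $e\mid q^n-1$. Let $\beta\in\overline{\mathbb{F}}_q$ be a root of $f(x^t)$ with $\beta^t=\alpha$; equivalently $\beta$ is a root of the binomial $x^t-\alpha\in\mathbb{F}_{q^n}[x]$.

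First I would establish the reduction: $f(x^t)$ is irreducible over $\mathbb{F}_q$ if and only if $x^t-\alpha$ is irreducible over $\mathbb{F}_{q^n}$. Since $\alpha=\beta^t\in\mathbb{F}_q(\beta)$ we have $\mathbb{F}_{q^n}=\mathbb{F}_q(\alpha)\subseteq\mathbb{F}_q(\beta)$, and the tower law gives $[\mathbb{F}_q(\beta):\mathbb{F}_q]=[\mathbb{F}_q(\beta):\mathbb{F}_{q^n}]\cdot n$. Because $f(x^t)$ is a polynomial of degree $nt$ admitting $\beta$ as a root, its irreducibility is equivalent to $[\mathbb{F}_q(\beta):\mathbb{F}_q]=nt$, hence to $[\mathbb{F}_q(\beta):\mathbb{F}_{q^n}]=t$; as $\beta$ is a root of the degree-$t$ polynomial $x^t-\alpha$ over $\mathbb{F}_{q^n}$, this last equality holds exactly when $x^t-\alpha$ is the minimal polynomial of $\beta$, i.e.\ when it is irreducible.

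Next I would invoke the classical criterion for binomials (for instance Lidl--Niederreiter, Theorem 3.75): for $a\in K^{*}$ and $t\geq 2$, the polynomial $x^t-a$ is irreducible over $K$ if and only if (i) $a$ is not a $p$-th power in $K$ for every prime $p\mid t$, and (ii) if $4\mid t$ then $a\notin-4K^{4}$. The bulk of the remaining work is to show that, for $K=\mathbb{F}_{q^n}$ and $a=\alpha$ of order $e$, conditions (i)--(ii) are equivalent to the three numbered conditions. Writing $\alpha=g^{k}$ for a generator $g$ of $\mathbb{F}_{q^n}^{*}$, one has $\gcd(k,q^n-1)=(q^n-1)/e$, and for a prime $p\mid q^n-1$ the element $\alpha$ is a $p$-th power exactly when $p\mid(q^n-1)/e$; moreover if $p\nmid q^n-1$ the $p$-th power map is bijective and $\alpha$ is automatically a $p$-th power. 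Hence (i) forces $p\mid q^n-1$ and $p\nmid(q^n-1)/e$ for every prime $p\mid t$, which (since $p\mid q^n-1$ together with $p\nmid(q^n-1)/e$ yields $p\mid e$) is precisely $\gcd(t,(q^n-1)/e)=1$ together with each prime factor of $t$ dividing $e$, namely conditions $1$ and $2$.

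Finally I would treat the case $4\mid t$ to match condition (ii) with condition $3$. Assuming (i) holds and $4\mid t$, condition (i) at $p=2$ forces $(q^n-1)/e$ to be odd while $q^n-1$ is even, so in $\alpha=g^{k}$ the exponent $k$ is odd and $\alpha$ is a non-square. The key identity is that when $4\mid q^n-1$ a primitive fourth root of unity $i\in\mathbb{F}_{q^n}$ satisfies $(1+i)^4=-4$, so $-4$ is a fourth power and $-4K^4=K^4$; since the non-square $\alpha$ is not a fourth power, condition (ii) holds. Conversely, when $q^n\equiv3\bmod 4$ one has $K^4=(K^{*})^2$ and $-4$ is a non-square, so $-4K^4$ is exactly the set of non-squares, which contains $\alpha$ and makes (ii) fail. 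Thus, under (i), condition (ii) is equivalent to $4\mid q^n-1$, matching condition $3$, and the three numbered conditions together are equivalent to (i)--(ii). The main obstacle is the binomial criterion itself: if one is not permitted to cite it, one must reprove it by reducing to prime-power exponents and handling the prime $2$ separately, where the $-4K^4$ phenomenon is exactly the delicate point.
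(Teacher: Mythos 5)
The paper does not actually prove this statement: it is imported as Theorem 3.9 of \cite{menezes} and used as a black box throughout. Your proposal therefore does something the paper never does, namely supply a proof, and the proof you sketch is correct. The reduction step --- $f(x^t)$ is irreducible over $\mathbb{F}_q$ if and only if $x^t-\alpha$ is irreducible over $\mathbb{F}_{q^n}$, proved by the degree count $[\mathbb{F}_q(\beta):\mathbb{F}_q]=[\mathbb{F}_q(\beta):\mathbb{F}_{q^n}]\cdot n$ for a root $\beta$ of $x^t-\alpha$ --- is sound in both directions, and the translation of the binomial criterion into the three numbered conditions is carried out correctly: for a prime $p\mid t$, writing $\alpha=g^k$ and using $\gcd(k,q^n-1)=(q^n-1)/e$, the element $\alpha$ fails to be a $p$-th power exactly when $p\mid q^n-1$ and $p\nmid (q^n-1)/e$, which (since $p\mid q^n-1$ and $p \nmid (q^n-1)/e$ force $p\mid e$) is precisely conditions (1) and (2); and the identity $(1+i)^4=-4$ when $4\mid q^n-1$, together with $(K^*)^4=(K^*)^2$ and $-4$ a non-square when $q^n\equiv 3 \bmod 4$, shows that under (1) and (2) the condition $\alpha\notin -4K^4$ is equivalent to $4\mid q^n-1$, i.e.\ condition (3). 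One slip of attribution: the criterion you quote ($a\notin K^p$ for all primes $p\mid t$, and $a\notin -4K^4$ if $4\mid t$) is the general-field Vahlen--Capelli theorem (e.g.\ Lang, \emph{Algebra}, Thm.\ VI.9.1), not Lidl--Niederreiter Theorem 3.75; the latter is the finite-field version phrased in terms of the order of $a$ (each prime factor of $t$ divides ${\rm ord}(a)$ but not $(Q-1)/{\rm ord}(a)$, and $Q\equiv 1\bmod 4$ if $4\mid t$). Had you invoked that finite-field version directly with $Q=q^n$ and $a=\alpha$ of order $e$, conditions (1)--(3) would fall out immediately after your reduction step; your $-4K^4$ analysis is, in effect, a correct proof of the equivalence between the two formulations, and it is exactly the delicate point you flag at the end.
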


We remark the following: suppose $t$ factors into primes as $p_1^{e_1}\cdot p_2^{e_2}\cdots p_r^{e_r}$. If $t$ satisfies the condition of the theorem, then so does $t'=p_1^{f_1}\cdot p_2^{f_2}\cdots p_r^{f_r}$ with $f_i\geq e_i$, unless $4\nmid t$ and $4 \mid t'$. So if $f(x^t)$ is irreducible, then increasing the exponents of primes in the factorisation of $t$ without changing the prime factors in the factorisation does not change the irreducibility of $f(x^t)$. As a particular case, for $t'=t^r$, we obtain 

\begin{proposition} \label{poweriterative} Let $f(x)\in \mathbb F_q[x]$ be a  polynomial of degree $n$ and fix a positive integer $t$. If $t$ is even assume moreover that $4|(q^n-1)$. Suppose $f(x^t)$ is irreducible. Then for all $r\geq 0$, the polynomial $f(x^{t^r})$ is irreducible.
\end{proposition}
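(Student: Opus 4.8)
The plan is to deduce Proposition~\ref{poweriterative} from Theorem~\ref{menezestheorem} by an induction on $r$, using the stability observation stated just before the proposition: increasing the exponents of the primes already dividing $t$ (without introducing new prime factors) preserves irreducibility, except for the subtle $4 \mid t'$ issue. So my first move is to record that $f(x^{t^r}) = f\bigl((x)^{t^r}\bigr)$ is exactly $S_{t^r}(f)$ in the notation of the excerpt, and to apply Theorem~\ref{menezestheorem} with $t$ replaced by $t^r$. Let $e$ be the exponent of $f$ (the order of a root); the three conditions to verify for $t^r$ are $\gcd(t^r,(q^n-1)/e)=1$, that every prime factor of $t^r$ divides $e$, and that $4 \mid (q^n-1)$ whenever $4 \mid t^r$.

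First I would extract, from the hypothesis that $f(x^t)$ is irreducible, the information Theorem~\ref{menezestheorem} yields for the exponent $t$: namely $\gcd(t,(q^n-1)/e)=1$ and every prime factor of $t$ divides $e$. (The condition $4 \mid (q^n-1)$ when $4 \mid t$ also holds, but I will handle the divisibility-by-$4$ matter separately.) The crucial point is that $t$ and $t^r$ have \emph{exactly the same set of prime factors}. Therefore condition (2) for $t^r$ is immediate: every prime factor of $t^r$ divides $e$ because every prime factor of $t$ does. For condition (1), I would argue that $\gcd(t,(q^n-1)/e)=1$ forces $\gcd(t^r,(q^n-1)/e)=1$, since a prime dividing both $t^r$ and $(q^n-1)/e$ would divide $t$ and $(q^n-1)/e$, contradicting coprimality; this is a one-line consequence of the equality of prime supports.

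The only delicate step is condition (3), the divisibility-by-$4$ clause, which is precisely why the extra hypothesis ``if $t$ is even assume moreover that $4\mid(q^n-1)$'' is imposed in the proposition rather than merely assuming $f(x^t)$ irreducible. If $t$ is odd, then $t^r$ is odd, so $4 \nmid t^r$ and condition (3) is vacuous. If $t$ is even, then $4 \mid t^r$ may well hold (indeed it does for $r \geq 2$), so condition (3) becomes active for the iterates even when it was vacuous or automatically satisfied for $t$ itself; this is exactly the exceptional case flagged in the remark preceding the proposition. The added hypothesis $4 \mid (q^n-1)$ supplies precisely what condition (3) demands, uniformly in $r$, and so I expect this to be the main (though still routine) obstacle: ensuring that the increase of the $2$-adic valuation from $t$ to $t^r$ does not break irreducibility. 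With $4 \mid (q^n-1)$ in hand, condition (3) holds for all $r$.

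Having verified all three conditions of Theorem~\ref{menezestheorem} for the exponent $t^r$, I conclude that $f(x^{t^r})$ is irreducible for every $r \geq 0$ (the cases $r=0$ and $r=1$ being the hypotheses themselves or trivial), completing the proof. No genuine induction is even required once the prime-support equality is in place, since the conditions for $t^r$ follow directly from those for $t$ together with the supplementary $4 \mid (q^n-1)$ assumption; the argument is a direct application of the cited criterion rather than an iterative one.
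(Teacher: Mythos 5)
Your proof is correct and follows essentially the same route as the paper, which derives the proposition from Theorem~\ref{menezestheorem} by noting that conditions (1) and (2) depend only on the set of prime factors of the exponent (identical for $t$ and $t^r$), while the divisibility-by-$4$ clause is exactly what the supplementary hypothesis $4\mid(q^n-1)$ is there to guarantee. The only point you leave implicit (as does the paper) is that irreducibility of $f(x^t)$ forces irreducibility of $f$ itself, which is needed to invoke the cited criterion.
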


We rephrase Proposition \ref{poweriterative} in a iterative manner:
\begin{proposition}\label{iteration}  Let $f(x)\in \mathbb F_q[x]$ be a  polynomial of degree $n$ and fix a positive integer $t$. If $t$ is even assume moreover that $4|(q^n-1)$. Let $f_0(x)=f(x)$ and for $i\geq 1$ define $f_i(x)=f_{i-1}(x^t)$. Assume that $f_1(x)$ is irreducible. Then, for all $i\geq 0$, the polynomial $f_i(x)$ is irreducible of degree $(\deg f)\cdot t^i$.
\end{proposition}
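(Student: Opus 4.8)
The plan is to observe that Proposition \ref{iteration} is merely a reformulation of Proposition \ref{poweriterative}, so the entire task reduces to identifying the iterates $f_i$ with the polynomials $f(x^{t^i})$. First I would prove, by induction on $i$, the closed form $f_i(x) = f(x^{t^i})$. The base case $f_0(x) = f(x) = f(x^{t^0})$ holds by definition. For the inductive step, assuming $f_{i-1}(x) = f(x^{t^{i-1}})$, I substitute $x^t$ for $x$ to obtain $f_i(x) = f_{i-1}(x^t) = f\bigl((x^t)^{t^{i-1}}\bigr) = f(x^{t^i})$, where the last equality is the identity $t\cdot t^{i-1} = t^i$ in the exponent.

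Once this closed form is in hand, both conclusions follow at once. The degree statement is immediate: composing $f$ with $x^{t^i}$ multiplies its degree by $t^i$, so $\deg f_i = t^i\deg f = (\deg f)\cdot t^i$. For irreducibility, I note that the hypothesis that $f_1(x) = f(x^t)$ is irreducible, together with the assumption that $4\mid(q^n-1)$ when $t$ is even, are precisely the hypotheses of Proposition \ref{poweriterative}. Applying that proposition with $r = i$ then gives that $f(x^{t^i})$ is irreducible for every $i\geq 0$, and via the closed form this is exactly the irreducibility of $f_i$.

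Since Proposition \ref{poweriterative} is already available, there is no genuine obstacle here; the only content is the elementary index bookkeeping that turns the one-step recursion $f_i = f_{i-1}(x^t)$ into the single substitution $f \mapsto f(x^{t^i})$. The substantive input, namely that irreducibility of $f(x^t)$ propagates to all higher iterates under the stated parity condition, is precisely what Proposition \ref{poweriterative}, and underlying it the criterion of Theorem \ref{menezestheorem} together with the remark on raising prime exponents, already supplies.
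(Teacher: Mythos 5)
Your proposal is correct and is essentially the paper's own argument: the paper offers no separate proof of Proposition \ref{iteration}, explicitly presenting it as a mere rephrasing of Proposition \ref{poweriterative}, and your induction establishing $f_i(x)=f(x^{t^i})$ followed by an application of that proposition is exactly the intended bookkeeping. Nothing is missing.
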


We will see below that the iterative construction of self-reciprocal irreducible polynomials using the $R$-transform introduced by Cohen corresponds (up to a change of variables) to the sequence above in the case where  $q$ is odd and $t=2$.

\section{Generalities on the action of $GL_2(\mathbb{F}_q)$ on polynomials}\label{action}
 We recall the following classical results on irreducible polynomials over finite fields:

\begin{proposition}\label{matac} Let $g(x)\in \mathbb F_q[x]$ be a polynomial of degree $n$. Let $a,b,c,d\in \mathbb F_q$ with $ad-bc\neq 0$, so that 
\begin{equation}\label{sigma}
\sigma =\left[ {\begin{array}{cc}
   a & b \\       c & d \\      \end{array} } \right] \in {\rm GL}_2(\mathbb{F}_q)
   \end{equation}
is an invertible matrix over $\mathbb F_q$. Consider the polynomial $P_\sigma(g)(x)$ as defined in \eqref{Psigma}. Then, 

\begin{enumerate}
\item[i)] $P_\sigma(g)(x)$ is of degree $n$ if and only if $g(\sigma\cdot \infty)\neq 0$
\item[ii)] if $g(\sigma\cdot \infty)\neq 0$,  then  for every $\tau \in GL_2(\mathbb F_q)$, we have that 
$$ P_\tau \circ P_\sigma (g) = P_{\sigma\cdot\tau  }(g)$$

\item[iii)] if $g(x)$ is irreducible,  then $P_\sigma(g)(x)$  is irreducible

\item[iv)] if $P_\sigma(g)(x)$  is irreducible and of degree $n$, then $g(x)$ is irreducible 
\end{enumerate}
\end{proposition}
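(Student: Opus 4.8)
The plan is to reduce all three parts to an explicit manipulation of the substitution $x\mapsto\frac{ax+b}{cx+d}$, using that this substitution implements the M\"obius action and that a matrix with entries in $\mathbb{F}_q$ acts compatibly with the $q$-power Frobenius. For part i) I would expand the definition directly: writing $f(x)=\sum_{i=0}^n a_ix^i$ with $a_n\neq 0$, one gets
\[
P_\sigma(f)(x)=\sum_{i=0}^n a_i(ax+b)^i(cx+d)^{n-i},
\]
a polynomial of degree at most $n$ whose coefficient of $x^n$ is $\sum_{i=0}^n a_ia^ic^{n-i}$. When $c\neq 0$ this equals $c^n f(a/c)=c^n f(\sigma\cdot\infty)$, which is nonzero precisely when $f(\sigma\cdot\infty)\neq 0$; when $c=0$ (so $ad\neq 0$ and $\sigma\cdot\infty=\infty$) only the top term survives, the coefficient being $a_na^n\neq 0$, consistent with the convention that a polynomial of positive degree does not vanish at $\infty$. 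This settles i).

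For part ii) I would substitute directly. Since $f(\sigma\cdot\infty)\neq 0$, part i) guarantees $\deg P_\sigma(f)=n$, so in forming $P_\tau(P_\sigma(f))$ the correct exponent $n$ occurs. Writing $\tau=\begin{bmatrix} a' & b' \\ c' & d'\end{bmatrix}$ and using $P_\sigma(f)(y)=(cy+d)^n f\left(\frac{ay+b}{cy+d}\right)$ with $y=\frac{a'x+b'}{c'x+d'}$, the outer factor $(c'x+d')^n$ clears the denominators introduced by $\frac{ay+b}{cy+d}$ and reassembles as $\big((ca'+dc')x+(cb'+dd')\big)^n$, which is exactly the bottom row of $\sigma\tau$ raised to the $n$-th power; combined with the identity $\frac{a y+b}{c y+d}=\frac{(aa'+bc')x+(ab'+bd')}{(ca'+dc')x+(cb'+dd')}$, i.e. composition of the associated M\"obius maps corresponds to $\sigma\tau$, this yields precisely $P_{\sigma\tau}(f)(x)$. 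The whole step is a bookkeeping verification that matrix multiplication matches composition.

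Part iii) carries the real content, and I would argue on roots. Assume first $f(\sigma\cdot\infty)\neq 0$, which is automatic when $n=\deg f\geq 2$, since an irreducible polynomial of degree $\geq 2$ has no root in $\mathbb{F}_q$ while $\sigma\cdot\infty\in\mathbb{F}_q\cup\{\infty\}$. Factoring $f=a_n\prod_{j=1}^n(x-\alpha_j)$ over $\overline{\mathbb{F}}_q$ and inserting this into the definition gives
\[
P_\sigma(f)(x)=a_n\prod_{j=1}^n\big((a-\alpha_jc)x+(b-\alpha_jd)\big),
\]
so the roots of $P_\sigma(f)$ are $\beta_j=\frac{\alpha_jd-b}{a-\alpha_jc}=\sigma^{-1}\cdot\alpha_j$. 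As $\sigma^{-1}$ has entries in $\mathbb{F}_q$, its action commutes with Frobenius, $(\sigma^{-1}\cdot\alpha)^q=\sigma^{-1}\cdot\alpha^q$. Hence the Galois conjugates of $\beta_1=\sigma^{-1}\cdot\alpha_1$ are exactly the $\sigma^{-1}\cdot\alpha_1^{q^i}$; since $\sigma^{-1}$ is a bijection of $\hat{\mathbb{F}}_q$ and $\alpha_1$ has $n$ distinct conjugates (separability over the finite field), $\beta_1$ also has $n$ distinct conjugates and thus generates a degree-$n$ extension of $\mathbb{F}_q$. Its minimal polynomial then has degree $n=\deg P_\sigma(f)$ and divides $P_\sigma(f)$, forcing $P_\sigma(f)$ to be a scalar multiple of it, hence irreducible.

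I expect the main obstacle to be the bookkeeping around degree preservation rather than any deep idea: one must carry the hypothesis $f(\sigma\cdot\infty)\neq 0$ so that degrees do not drop, both in ii), where $P_\tau$ must use the exponent $n$, and in iii), where a dropped degree would correspond to some root $\alpha_j=\sigma\cdot\infty$ being sent to $\infty$. The genuine edge case is $\deg f=1$: if $f=x-\sigma\cdot\infty$ then $P_\sigma(f)$ is a nonzero constant, so iii) should be read with the proviso $f(\sigma\cdot\infty)\neq 0$, which as noted is automatic for $n\geq 2$. An alternative, coordinate-free route to iii) is to observe that $P_\sigma$ is multiplicative, $P_\sigma(gh)=P_\sigma(g)P_\sigma(h)$, and that part ii) with $\tau=\sigma^{-1}$ gives $P_{\sigma^{-1}}\circ P_\sigma=\mathrm{id}$; a nontrivial factorization of $P_\sigma(f)$ would then pull back to one of $f$, again modulo the same degree-preservation caveat.
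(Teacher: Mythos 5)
Your proofs of i) and ii) coincide with the paper's: the paper computes the same leading coefficient ($a_n a^n$ if $c=0$, $c^n f(a/c)$ otherwise) for i) and dismisses ii) as ``a consequence of part i) and a direct calculation,'' which is exactly the bookkeeping you carry out. The difference is in iii): the paper gives no argument at all, citing Corollary 3.8 of \cite{menezes}, whereas you supply a self-contained proof via the factorization $P_\sigma(f)=a_n\prod_j\bigl((a-\alpha_j c)x+(b-\alpha_j d)\bigr)$, the root correspondence $\beta_j=\sigma^{-1}\cdot\alpha_j$, and the Frobenius-equivariance of the M\"obius action of a matrix with entries in $\mathbb F_q$; this is correct and is essentially the content of Lemma \ref{roots} later in the paper, so your route makes the proposition independent of the external reference at no real cost. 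You are also right to flag the edge case $f=x-\sigma\cdot\infty$ (with $c\neq 0$), where $P_\sigma(f)$ degenerates to the nonzero constant $(bc-ad)/c$ and iii) as literally stated fails; the paper never addresses this inside the proposition but compensates by excluding $g=x-a/c$ in the hypotheses of Theorem \ref{mainthm} and by only invoking iii) in situations where $f(\sigma\cdot\infty)\neq 0$, so your proviso is the honest reading of the statement rather than a gap in your argument.
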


\begin{proof}
Part iii) is Corollary 3.8 in  \cite{menezes}. Let $a_n\neq 0$ be the leading coefficient of $g(x)$. Then, the coefficient of $x^n$ in $P_\sigma(g)(x)$  is $a_n\cdot a^n$ if $c=0$ and $c^n\cdot g(a/c)$ otherwise. Since $ad-bc \neq 0$, we have that $c=0$ implies $a \neq 0$. These relations prove part i).  Part ii) is a consequence of part i) and a direct calculation (cf. Lemma 2.2 in \cite{ST}). The hypothesis in  part iv), together with part i) imply that $g(\sigma\cdot \infty)\neq 0$. Hence we can use part ii) with $\tau=\sigma^{-1}$ to conclude that $g=P_{\sigma^{-1}}\circ P_\sigma(g)$. We see that $g$ is irreducible using part iii).  
\end{proof}


The following statement is a mild generalization of Lemma 1 in  \cite{Garefalakis}. 

\begin{lemma}\label{roots}
Let $g(x) \in \mathbb F_q[x]$  be a  polynomial of degree $n$ and  let $f=P_\sigma(g)$. Then, $f(\sigma^{-1}\cdot \infty) \neq 0$. Moreover, if  $\alpha_1,\alpha_2, \ldots \alpha_m$, with $m\leq n$, are the roots of $f$, then the elements  $\beta_1, \beta_2, \ldots \beta_m$ given by
\begin{equation}\label{rootsrelation}
\beta_i={\sigma}\cdot \alpha_i=\frac{a\alpha_i+b}{c\alpha_i+d}, \quad1\leq i\leq m,
\end{equation}
are  roots of $g$. 
\end{lemma}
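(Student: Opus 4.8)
The plan is to deduce all three assertions from Proposition \ref{matac}, feeding in its three parts in turn. First, the irreducibility of $f=P_\sigma(g)$ is immediate from part iii), since $g$ is irreducible. That $f$ has degree exactly $n$ (so that it genuinely has $n$ roots $\alpha_1,\dots,\alpha_n$ to speak of) follows from part i) together with the standing hypothesis $g(\sigma\cdot\infty)\neq 0$. As irreducible polynomials over a finite field are separable, these $n$ roots are distinct; the same applies to $g$.

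For the non-vanishing statement $f(\sigma^{-1}\cdot\infty)\neq 0$, the cleanest route is through the composition identity of part ii). Because $g(\sigma\cdot\infty)\neq 0$, part ii) applies with $\tau=\sigma^{-1}$ and yields $P_{\sigma^{-1}}(f)=P_{\sigma^{-1}}\circ P_\sigma(g)=P_{\sigma\cdot\sigma^{-1}}(g)=P_{\mathrm{id}}(g)=g$. In particular $P_{\sigma^{-1}}(f)$ has degree $n$. I would then apply part i) a second time, now to the polynomial $f$ and the matrix $\sigma^{-1}$: it asserts that $\deg P_{\sigma^{-1}}(f)=n$ precisely when $f(\sigma^{-1}\cdot\infty)\neq 0$. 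Since the degree has just been checked to be $n$, the desired non-vanishing follows.

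Finally, for the root correspondence I would argue directly from the definition \eqref{Psigma}. The non-vanishing $f(\sigma^{-1}\cdot\infty)\neq 0$ just established means (unwinding $\sigma^{-1}\cdot\infty=-d/c$ when $c\neq 0$, and treating $c=0$ separately) that $c\alpha_i+d\neq 0$ for every root $\alpha_i$ of $f$; hence each $\beta_i:=\sigma\cdot\alpha_i=\frac{a\alpha_i+b}{c\alpha_i+d}$ is a well-defined finite element. Evaluating $0=f(\alpha_i)=(c\alpha_i+d)^n\,g(\beta_i)$ and cancelling the nonzero factor $(c\alpha_i+d)^n$ shows $g(\beta_i)=0$, so each $\beta_i$ is a root of $g$. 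The step I expect to require the most care is verifying that the $\beta_i$ \emph{exhaust} the roots of $g$ rather than merely being among them: since $\alpha\mapsto\sigma\cdot\alpha$ is a bijection of $\hat{\mathbb F}_q$ and the $\alpha_i$ are distinct, the $\beta_i$ are $n$ distinct roots of $g$, and as $g$ is separable of degree $n$ this accounts for all of them, giving \eqref{rootsrelation}.
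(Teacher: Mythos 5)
Your proof is correct, and it diverges from the paper's in one interesting place: the verification that $f(\sigma^{-1}\cdot\infty)\neq 0$. The paper handles this by a case split on the degree: for $n\geq 2$ an irreducible polynomial has no roots in $\mathbb F_q$ (and $f(\infty)=\infty$ disposes of the point at infinity), while for $n=1$ the authors write $g(x)=A(x-\beta)$ and compute $f(-d/c)=-(A\det\sigma)/c\neq 0$ explicitly. You instead invoke the cocycle identity of Proposition \ref{matac}~ii) with $\tau=\sigma^{-1}$ to get $P_{\sigma^{-1}}(f)=P_{\mathrm{id}}(g)=g$, and then read off the non-vanishing from the degree criterion of part i) applied to $f$ and $\sigma^{-1}$. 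Your route is more uniform (no case analysis on $n$) and has the pleasant side effect of exhibiting the inverse relation $P_{\sigma^{-1}}(f)=g$ explicitly, which is in the spirit of how the paper later recovers $g_m=P_{\sigma^{-1}}(f_m)$; its only cost is that it leans on part ii), whose proof the paper leaves as ``a direct calculation.'' For the root correspondence, your argument (each $\beta_i=\sigma\cdot\alpha_i$ is a well-defined root of $g$ because $c\alpha_i+d\neq 0$, and the $n$ distinct images exhaust the roots of the separable degree-$n$ polynomial $g$) is essentially the paper's ``$f(\alpha)=0$ iff $g(\sigma\cdot\alpha)=0$'' phrased as a counting argument via the bijectivity of $\sigma$ on $\hat{\mathbb F}_q$; both are complete.
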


\begin{proof} 
Since $f(\infty)=\infty$, we may assume that $\sigma^{-1}\cdot \infty \neq \infty$, which amounts to the condition $c \neq 0$. Writing $g(x)=\sum_{i=0}^na_ix^i$, we see that 

$$f(x)=(cx+d)^ng\left(\frac{ax+b}{cx+d}\right)=\sum_{i=0}^na_i(ax+b)^i(cx+d)^{n-i}.$$

Hence, $f(\sigma^{-1}\cdot \infty) = f(-d/c)=a_n\left(-\det \sigma/c\right)^n \neq 0,$ as claimed. Also from the formula above we see that $f(\alpha)=0$ if and only if $c\alpha+d\neq 0$ and $g(\frac{a\alpha+b}{c\alpha+d})=0$, as desired. 
\end{proof}


The following Lemma will be applied in the next section when the starting polynomial is linear, and hence has a root in $\mathbb{F}_q$. 

\begin{lemma}\label{pia}
Let $g_0(x) \in \mathbb F_q[x]$ be a polynomial of degree one and let $t\geq 2$ be a positive integer. Let $\sigma \in {\rm GL}_2(\mathbb F_q)$ as in \eqref{sigma} and define $g_1:=g_{0}^{R_{\sigma,t}}$.  Assume that
\begin{enumerate}
\item[i)] $g_0(\sigma \cdot \infty) \neq 0$ 
\item[ii)] $g_1$ is irreducible. 
\end{enumerate}

Let $f_0:=P_\sigma g_0$. Then, $f_0\big( (\sigma^{-1}\cdot \infty)^t\big) \neq 0$.

\end{lemma}

\begin{proof}

Assume for contradiction that $f_0\big( (\sigma^{-1}\cdot \infty)^t\big) = 0$. Write $g_0(x)=B(x-\alpha)$ with  $B, \alpha \in \mathbb{F}_q, B \neq 0$ and $\alpha \neq \sigma\cdot \infty$. Then, Lemma \ref{roots} implies that $\alpha=\sigma\cdot (\sigma^{-1}\cdot \infty)^t$. Hence, the condition $\alpha \neq \sigma \cdot \infty$ amounts to $c \neq 0$.
 
We have that $$g_1(x) = P_{\sigma^{-1}}\circ S_t(f_0)(x)= \left( \frac{-cx+a}{\det \sigma} \right)^t f_0\bigg( \Big( \frac{dx-b}{-cx+a}\Big)^t \bigg).$$
 We observe that $p \nmid t$, since otherwise the previous expression and the surjectivity of the Frobenius map would show that $g_1$ is the $p$-th power of a polynomial, hence not irreducible. On the other hand, from the formula

$$g_1(x) = B(\det\sigma)^{-t} \bigg( a(dx-b)^t+b(-cx+a)^t -\alpha \big( c(dx-b)^t +d(-cx+a)^t\big) \bigg),$$

\noindent we infer $g_1(\sigma \cdot \infty)=\frac{B}{c^t}\cdot(a-\alpha c).$
This expression cannot vanish since the relation $a-\alpha c=0$ is equivalent to $\alpha=\sigma \cdot \infty$, which is excluded by hypothesis.

Since  $g_1(\sigma \cdot \infty)\neq 0$, we deduce that $g_1(u)=0$ if and only if $f_0\left( (\sigma^{-1} \cdot u)^t\right)=0$. Hence, $u$ is a root of $g_1$ if and only if 
$(\sigma^{-1} \cdot u)^t=(\sigma^{-1}\cdot \infty)^t$. We conclude that the set of roots of $g_1$ is $$S=\{ \sigma \cdot \big( \zeta (\sigma^{-1}\cdot \infty)\big) : \zeta^t=1\}. $$

We remark that the relation $\sigma^{-1} \cdot \infty = 0$ implies $\alpha=\sigma \cdot 0 = \infty$, which is not possible. Hence, $\sigma^{-1} \cdot \infty \neq 0$. On the other hand, $\sigma^{-1}\cdot \infty \neq \infty$ because $c \neq 0$. We conclude that the set $S$ has as many elements as the set of $t$-th roots of unity in $\overline{\mathbb{F}}_q$. Since $p \nmid t$, this implies that $S$ has $t$ elements. Hence, $\deg g_1=t=\deg S_t(f_0)$. Then, Proposition \ref{matac} i) ensures that
  $f_0 \big( (\sigma^{-1}\cdot \infty)^t\big) = S_t(f_0)(\sigma^{-1}\cdot \infty)  \neq 0$, as claimed. 
\end{proof}

\section{Proof of the main theorem}

We first show that the irreducibility of $g_1$ implies the irreducibility of the whole sequence. Then we prove Theorem \ref{mainthm} at the end of the section.

\begin{theorem} \label{general}
Let $g_0(x)\in \mathbb F_q[x]$ be an irreducible polynomial of degree $n$ and let $t\geq 2$ be a positive integer. If $q\equiv 3 \mod 4$ and $t$ is even assume moreover that $n$ is even. Let $\sigma \in {\rm GL}_2(\mathbb F_q)$ as in \eqref{sigma}. For $m\geq 1$ define $g_m=g_{m-1}^{R_{\sigma,t}}$.  Assume that
\begin{enumerate}
\item[i)] $g_0(\sigma \cdot \infty) \neq 0$ 
\item[ii)] $g_1$ is irreducible. 
\end{enumerate}

Then, $(g_m)_{m\geq 0}$ is a sequence of irreducible polynomials over $\mathbb F_q$ with $\deg g_m=t^mn$.

For $m\geq 0$, let $\zeta$ be a primitive $t^m$-th root of unity. Then the roots of $g_m$ are invariant under the action of the order $t^m$ matrix $M_{\sigma, \zeta}$ given by \eqref{thematrix}.
\end{theorem}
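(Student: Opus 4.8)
The plan is to prove Theorem \ref{general} in two independent parts: the irreducibility of the whole sequence, and the invariance of the roots under $M_{\sigma,\zeta}$.

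For the irreducibility, the key observation is that $R_{\sigma,t}$ is conjugate to $S_t$ via $P_\sigma$. First I would define $f_m := P_\sigma(g_m)$ and verify by induction, using Proposition \ref{matac}(ii) and the definition $g^{R_{\sigma,t}} = P_{\sigma^{-1}}\circ S_t\circ P_\sigma(g)$, that $f_m = S_t(f_{m-1}) = f_{m-1}(x^t)$. The inductive step requires knowing that $g_m(\sigma\cdot\infty)\neq 0$ at each stage so that the degrees are preserved and Proposition \ref{matac}(ii) applies; this follows from Lemma \ref{roots} applied to $g_m = P_{\sigma^{-1}}(f_m)$, since $f_m$ irreducible forces $f_m(\sigma^{-1}\cdot\infty)\neq 0$, which is exactly $g_m(\sigma\cdot\infty)\neq 0$ after tracking the inverse. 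Hypotheses (i) and (ii) give $\deg f_0 = n$ and the irreducibility of $f_1 = f_0(x^t)$; the condition on $q\equiv 3\bmod 4$, $t$ even, $n$ even translates into $4\mid(q^n-1)$, so Proposition \ref{iteration} applies to the sequence $(f_m)$ and yields that every $f_m = f_0(x^{t^m})$ is irreducible of degree $t^m n$. Since $g_m = P_{\sigma^{-1}}(f_m)$ is, up to normalization, irreducible by Proposition \ref{matac}(iii) and of the same degree by part (i), this establishes the first claim.

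For the invariance of roots, I would work with the explicit relation between the roots of $g_m$ and those of $f_m$ coming from Lemma \ref{roots}: if $\alpha$ is a root of $f_m$ then $\sigma\cdot\alpha$ is a root of $g_m$. The roots of $f_m = f_0(x^{t^m})$ satisfy the defining symmetry that if $\alpha$ is a root then so is $\zeta\alpha$ for any $t^m$-th root of unity $\zeta$, since $(\zeta\alpha)^{t^m} = \alpha^{t^m}$. Transporting this symmetry through $\sigma$, the map on roots of $g_m$ is $\sigma\cdot\alpha \mapsto \sigma\cdot(\zeta\alpha)$, which is the fractional linear transformation $\beta\mapsto \sigma\cdot\bigl(\zeta\cdot(\sigma^{-1}\cdot\beta)\bigr)$. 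The main computational point is to verify that this composite equals the action of the matrix $M_{\sigma,\zeta}$ in \eqref{thematrix}: one computes $\sigma\,D_\zeta\,\sigma^{-1}$, where $D_\zeta = \left[\begin{smallmatrix}\zeta & 0\\ 0 & 1\end{smallmatrix}\right]$ represents multiplication by $\zeta$, and checks it agrees with $M_{\sigma,\zeta}$ up to a scalar (which acts trivially as a fractional linear transformation).

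The main obstacle is the bookkeeping in the inductive step for irreducibility, specifically maintaining $g_m(\sigma\cdot\infty)\neq 0$ so that degrees never drop and Proposition \ref{matac}(ii) stays applicable; this is what makes the identification $f_m = f_{m-1}(x^t)$ legitimate rather than merely formal. The root-invariance computation is routine linear algebra once the correct conjugation $\sigma D_\zeta \sigma^{-1}$ is set up, and the factor $ad-bc$ appearing in the entries of $M_{\sigma,\zeta}$ is precisely the determinant that arises from clearing denominators in $\sigma^{-1}$, confirming the scalar ambiguity is harmless.
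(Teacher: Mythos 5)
Your overall strategy is the paper's: conjugate $R_{\sigma,t}$ to the power map via $f_m:=P_\sigma(g_m)$, reduce to Proposition \ref{iteration}, and transport the $\mu_\zeta$-symmetry of the roots through $\sigma$. However, there is a genuine gap in the inductive identification $f_m=f_{m-1}(x^t)$. To collapse $P_\sigma\circ P_{\sigma^{-1}}$ in $f_{m}=P_\sigma\circ P_{\sigma^{-1}}\circ S_t(f_{m-1})$ via Proposition \ref{matac}(ii), the hypothesis you must verify is that $S_t(f_{m-1})$ does not vanish at $\sigma^{-1}\cdot\infty$, i.e.\ that $f_{m-1}\bigl((\sigma^{-1}\cdot\infty)^t\bigr)\neq 0$ --- the value of $f_{m-1}$ at the \emph{$t$-th power} of $\sigma^{-1}\cdot\infty$. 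You instead track the conditions $f_m(\sigma^{-1}\cdot\infty)\neq 0$ and $g_m(\sigma\cdot\infty)\neq 0$, which are the ones governing degree preservation under $P_{\sigma^{-1}}$ and $P_\sigma$, not the one needed here. For $m\geq 2$ the correct condition is automatic (a non-linear irreducible $f_{m-1}$ has no roots in $\mathbb F_q\cup\{\infty\}$), but in the base case $m=1$ with $n=1$ the polynomial $f_0$ is linear and can perfectly well vanish at $(\sigma^{-1}\cdot\infty)^t$; hypothesis i) does not exclude this. This is exactly where the paper spends the bulk of its proof: assuming $f_0\bigl((\sigma^{-1}\cdot\infty)^t\bigr)=0$, it shows $\deg g_1<t$, then exhibits the $t$ distinct roots $\sigma\cdot\bigl(\zeta(\sigma^{-1}\cdot\infty)\bigr)$ of $g_1$ (after first ruling out $p\mid t$ using the irreducibility of $g_1$), a contradiction. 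Without this argument your claim ``Hypotheses (i) and (ii) give \ldots the irreducibility of $f_1=f_0(x^t)$'' is unjustified: hypothesis ii) gives irreducibility of $f_1=P_\sigma(g_1)$, but the identity $f_1=f_0(x^t)$ is precisely what is at stake. A secondary bookkeeping point: $g_m=P_{\sigma^{-1}}(f_m)$ also requires $g_m(\sigma\cdot\infty)\neq 0$, which cannot be cited as a consequence of irreducibility of $g_m$ before that irreducibility is proved; the paper obtains it from a degree sandwich ($\deg g_{m+1}\leq t\deg f_m=\deg f_{m+1}\leq\deg g_{m+1}$).

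On the root-invariance part your computation is correct, and in fact cleaner than the paper's: the symmetry on the roots of $g_m$ is $\beta\mapsto\sigma\cdot\bigl(\zeta\,(\sigma^{-1}\cdot\beta)\bigr)$, represented by $\sigma D_\zeta\sigma^{-1}$, which is indeed the matrix $M_{\sigma,\zeta}$ of \eqref{thematrix} up to the harmless scalar $\det\sigma$. (The paper's proof body writes the conjugation in the opposite order, $\sigma^{-1}\circ\mu_\zeta\circ\sigma$, and displays a matrix that does not match \eqref{thematrix}; your direction is the one consistent with Lemma \ref{roots} and with the theorem statement.)
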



\begin{proof} Define $f_m := P_\sigma (g_m)$ for $m \geq0$. Since $g_0$ and $g_1$ are irreducible and  $g_0 (\sigma \cdot \infty) \neq 0$, Proposition \ref{matac} i) and iii) ensure that $f_0$ and $f_1$ are also irreducible and $\deg f_0 = n$. 

We now show that $f_m(x)=f_{m-1}(x^t)$ for all $m \geq 1$. We proceed by induction on $m$. We have that  $S_t \circ f_0(\sigma^{-1}\cdot \infty) = f_0\big( (\sigma^{-1}\cdot \infty)^t\big) \neq 0$ due to the irreducibility of the non linear polynomial $f_0$ if $n\geq2$ and due to Lemma \ref{pia} if $n=1$.  Hence, Proposition \ref{matac} ii) ensures that $$f_1(x) = P_{\sigma}\circ P_{\sigma^{-1}}\circ S_t \circ f_0 (x)=S_t\circ f_0(x) = f_0(x^t),$$ thus proving the claim for $m=1$. Now assume the claim holds for $m\geq 1$.  Since  $$S_t \circ f_m (\sigma^{-1}\cdot \infty) = f_m\big( (\sigma^{-1}\cdot \infty)^t\big) = f_1\big( (\sigma^{-1}\cdot \infty)^{t^{m-1}}\big)$$ cannot vanish due to the irreducibility of the non linear polynomial $f_1$, using Proposition \ref{matac} ii) we have that $$f_{m+1}(x) = P_{\sigma}\circ P_{\sigma^{-1}} \circ S_t \circ f_m(x)=f_m(x^t),$$ as claimed. Then, since $f_0$ and $f_1$ are irreducible, we deduce from Proposition \ref{iteration} that $f_m$ is irreducible for all $m \geq 1$. 

On the other hand, we  have that $\deg g_m = \deg f_m$. Indeed, since $f_m=P_\sigma g_m$, we clearly have that $\deg f_m\leq \deg g_m$ and  from $\deg g_m =\deg P_{\sigma^{-1}} \circ S_t \circ P_\sigma g_{m-1}\leq t\deg g_{m-1}$ we deduce the reverse inequality $\deg g_m \leq t^m\cdot n = \deg f_m$.   Hence, applying Proposition \ref{matac} iv) we deduce that $g_m$ is irreducible.

Let $\zeta$ be a $t^m$-th root of unity.  Note that since $f_m(x)=f_{0}(x^{t^m})$, the roots of $f_m$ are invariant under the multiplication by $\zeta$ map $\mu_{\zeta}:\beta \mapsto \zeta\beta$. Using Equation~\eqref{rootsrelation}, we see that the roots of $g_m$ are invariant under the map 
$$\sigma^{-1}\circ\mu_{\zeta}\circ \sigma: \alpha\mapsto \frac{(\zeta ad-bc)\cdot \alpha+(\zeta-1)bd}{-(\zeta-1)ac\cdot \alpha+ad-\zeta bc}.$$  This map is of order $t^m$ and can be described using the matrix 
\begin{equation*} 
\left[ {\begin{array}{cc}
\zeta ad-bc & (\zeta-1)bd \\ -(\zeta-1)ac & ad-\zeta bc\\ \end{array} } \right] 
\in  {\rm GL}_2(\mathbb F_q(\zeta)).
\end{equation*}

 \end{proof}

{\bf Proof of Theorem \ref{mainthm}:} In view of Theorem \ref{general}, we only need to check that $g_1$ is irreducible under the stated assumptions. 



 Let $f_0:=P_\sigma(g_0), f_1:=P_\sigma(g_1)$. Since $g_0(\sigma\cdot \infty) \neq 0$, we have that $f_0$ is irreducible of degree $n$ by Proposition \ref{matac} i) and iii). 
 
 {\bf Claim}. We have that $f_0\left((\sigma^{-1}\cdot \infty)^t\right) \neq 0.$ 
 
 Assume for contradiction that $f_0\left((\sigma^{-1}\cdot \infty)^t\right)= 0.$ Since $f_0$ is irreducible, we have that $n=1$. Then, $g_0$ is of the form $g_0(x)=B\cdot (x-\alpha)$ with  $B, \alpha \in \mathbb{F}_q, B \neq 0$ and $\alpha \neq \sigma\cdot \infty$. Moreover, Lemma \ref{roots} implies that $\alpha= \sigma \cdot (\sigma^{-1}\cdot \infty)^t$. A short calculation shows that in this case $\eta(g;\sigma)=(\sigma\cdot \infty)^t$ is a $t$-th power, contradicting the hypothesis. 
 
Using the Claim and Proposition \ref{matac} ii), we have that $$f_1(x)=P_\sigma   \circ P_{\sigma^{-1}}\circ S_t(f_0)(x) =  f_0(x^t).$$
 
Now we show that $f_1(x)$ is irreducible, by checking the hypothesis of Theorem~\ref{menezestheorem}. Let $\beta$ be a root of $f_0$ and let $e$ be the order of $\beta$ in $\mathbb F_{q^n}^*$. Assumption (3) is clearly implied by our hypothesis on $t$. If we assume that assumption (1) holds, then assumption (2) holds as well. Indeed, let $\ell |t$ be a prime factor. Since $\ell | (q-1)$ and (1) implies that $\ell$ does not divide $(q^n-1)/e$, we conclude that $\ell | e$. 
 
 We are thus reduced to  check hypothesis (1) in Theorem \ref{menezestheorem}. Assume for contradiction that there is a prime number $\ell$ such that $\ell |\gcd(t,(q^n-1)/e)$. Then, $(q^n-1)/(e\ell)$ is an integer, and we have that 
 $$\beta^{\frac{q^n-1}{\ell}}=(\beta^e)^{\frac{q^n-1}{e\ell}}=1.$$ 

On the other hand, using that $\ell | q-1$, we have that 

$$\beta^{\frac{q^n-1}{\ell}}=\bigl(\beta^{q^{n-1}+q^{n-2}+\ldots+q+1}\bigr)^{(q-1)/\ell}=\bigl({\rm N}_{\mathbb F_{q^n}/\mathbb F_q}(\beta)\bigr)^{(q-1)/\ell},$$
where ${\rm N}_{\mathbb F_{q^n}/\mathbb F_q}$ denotes the norm map corresponding to the extension $\mathbb F_{q^n}/\mathbb F_q$. We conclude that ${\rm N}_{\mathbb F_{q^n}/\mathbb F_q}(\beta)$ is an $\ell$-th power in $\mathbb F_q$. 

We express this condition in terms of the coefficients of $g_0$. Write $$f_0(x)=b_nx^n+b_{n-1}x^{n-1}+\cdots+b_0, \quad g_0(x)=x^n+a_{n-1}x^{n-1}+\cdots+ a_1x+a_0.$$ Then, ${\rm N}_{\mathbb F_{q^n}/\mathbb F_q}(\beta)=(-1)^nb_0/b_n$ and we have that
$$f_0(x)=P_\sigma(g_0)(x)=(cx+d)^{n}g_0\left(\frac{ax+b}{cx+d}\right).$$
So 
$$b_0=d^{n}g_0\left(\frac{b}{d}\right) \text{ and } b_n=c^{n}g_0\left(\frac{a}{c}\right),$$
with the convention that $b_0=b^n$ if $d=0$  and $b_n=a^n$ if  $c=0$. Then, ${\rm N}_{\mathbb F_{q^n}/\mathbb F_q}(\beta)=(-1)^nb_0/b_n=\eta(g_0;\sigma)$. This is a contradiction, as $\eta(g_0;\sigma)$  is not an $\ell$-th power in $\mathbb F_q$. We conclude that $f_1$ is irreducible. 

Since $\deg f_1 \leq \deg g_1$ and $\deg g_1 \leq t \deg g_0 = nt=\deg f_1$, we have that $\deg f_1=\deg g_1$. Hence, $g_1$ is irreducible by Proposition \ref{matac} iv). $\square$


\section{Coordinate changes over a quadratic extension}\label{quadratic}

Let $\sigma \in {\rm GL}_2(\mathbb F_{q^2})$ and $t \geq 2$. We say that $R_{\sigma,t}$ is defined over $\mathbb F_q$ if   for any polynomial $g(x) \in \mathbb F_q[x]$ with $g(\sigma \cdot \infty) \neq 0$,  there exists an element $\kappa \in \mathbb F_{q^2}^*$ such that  $\kappa \cdot g^{R_{\sigma,t}}(x) \in \mathbb F_q[x]$. 

Let $g(x) \in \mathbb F_q[x]$ be an irreducible polynomial of degree $n$. If $n$ is odd, then $g(x)$ is irreducible in $\mathbb F_{q^2}[x]$. If $n$ is even, then it has exactly two irreducible factors $r(x),s(x) \in \mathbb F_{q^2}[x]$, both of degree $n/2$. 

Assume $R_{\sigma,t}$ is defined over $\mathbb F_q$. Consider the sequence $g_0=g$ and $g_m=g_{m-1}^{R_{\sigma,t}}$ for $m\geq 1$. If $n$ is even, define in a similar fashion sequences $r_m$ and $s_m$ starting from $r$ and $s$. Clearly, $g_m=r_m\cdot s_m$ in this case.

\begin{theorem}\label{quadraticit}
 Let $\sigma \in {\rm GL}_2(\mathbb F_{q^2})$ and $t \geq 2$ be such that $R_{\sigma,t}$ is defined over  $\mathbb F_q$. Assume that every prime factor of $t$ divides $q^2-1$. Let $g(x) \in \mathbb F_q[x]$ be an irreducible polynomial of degree $n$ such that $g(\sigma \cdot \infty) \neq 0$. 
 
Consider the sequence $g_m$ defined as above as well as $r_m$ if $n$ is even. If $n$ is odd (resp. even), assume that for all prime numbers $\ell |t$, the element $\eta(g;\sigma)$ (resp. $\eta(r;\sigma)$) is not an $\ell$-th power in $\mathbb F_{q^2}$.

Then, if $n$ is odd or if $4|nt$, we have that a nonzero multiple of $g_m(x)$ is an irreducible polynomial in $\mathbb F_q[x]$ for all $m \geq 0$. If $n$ is even, a nonzero multiple of $r_m$  is an irreducible polynomial in $\mathbb F_{q^2}[x]$ for all $m \geq 0$. 

Moreover, if $n$ is odd (resp. if $n$ is even), the roots of $g_m$ (resp. $r_m$)  are invariant under the action of the matrix $M_{\zeta,\sigma}$ given by \eqref{thematrix} (here, $\zeta$ is a primitive $t^m$-root of unity).
\end{theorem}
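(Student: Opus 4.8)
The plan is to apply Theorem \ref{mainthm} over the quadratic extension $\mathbb F_{q^2}$ and then descend the resulting irreducibility back to $\mathbb F_q$. The first thing I would record is that $q^2\not\equiv 3\pmod 4$ always holds (odd squares are $\equiv 1$ and even $q$ gives $4\mid q^2$), so that the parity hypothesis in Theorem \ref{mainthm}, read with base field $\mathbb F_{q^2}$, is vacuous; hence the only genuine input needed to run Theorem \ref{mainthm} over $\mathbb F_{q^2}$ is the non-$\ell$-th-power condition on $\eta$, together with the hypotheses on $t$ and $\sigma$, all of which are granted. I would also record two formal properties of the transform. Since each of $P_\sigma$, $S_t$, $P_{\sigma^{-1}}$ is $\mathbb F_{q^2}$-linear, so is $R_{\sigma,t}$; thus the construction is insensitive to scalar factors, and the hypothesis that $R_{\sigma,t}$ is defined over $\mathbb F_q$ lets me replace each $g_m$ by its $\mathbb F_q$-multiple $\kappa_m g_m$ without affecting irreducibility or degree. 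Since each of these maps is also multiplicative on polynomials not vanishing at the relevant point, $R_{\sigma,t}$ is multiplicative, which justifies $g_m=r_m s_m$.

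For $n$ odd, $g$ remains irreducible over $\mathbb F_{q^2}$ and the hypothesis $g(\sigma\cdot\infty)\neq 0$ is exactly what Theorem \ref{mainthm} requires. Applying it over $\mathbb F_{q^2}$ yields that $g_m$ is irreducible over $\mathbb F_{q^2}$ of degree $t^m n$ with roots invariant under $M_{\sigma,\zeta}$. Because some scalar multiple $\kappa_m g_m$ lies in $\mathbb F_q[x]$ and is still irreducible over the larger field $\mathbb F_{q^2}$, it is a fortiori irreducible over $\mathbb F_q$; this settles both assertions in the odd case with no further conditions.

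For $n$ even I would factor $g=rs$ over $\mathbb F_{q^2}$ into its two distinct monic irreducible factors of degree $n/2$, and apply Theorem \ref{mainthm} to $r$ (noting $r(\sigma\cdot\infty)\neq 0$ follows from $g(\sigma\cdot\infty)\neq 0$). This gives directly the claimed irreducibility of $r_m$ over $\mathbb F_{q^2}$, its degree $t^m n/2$, and the invariance of its roots under $M_{\sigma,\zeta}$, establishing every statement that concerns $r_m$. The remaining point is the descent of $g_m=r_m s_m$ to an irreducible polynomial over $\mathbb F_q$ when $4\mid nt$, and this is where I expect the only real work. Writing $\overline{\phantom{x}}$ for the coefficientwise action of $x\mapsto x^q$, the relation $\overline{g_m}=g_m$ (valid up to a scalar, since $g_m$ is a scalar multiple of an $\mathbb F_q$-polynomial) combined with unique factorization over $\mathbb F_{q^2}$ forces $\{r_m,s_m\}=\{\overline{r_m},\overline{s_m}\}$ up to scalars, so either $\overline{r_m}=r_m$ or $\overline{r_m}=s_m$.

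The hard part will be ruling out the first alternative, and this is exactly where the condition $4\mid nt$ enters. For $m\geq 1$ it gives $4\mid nt^m$, hence $\deg r_m=nt^m/2$ is even; were $r_m$ a scalar multiple of an $\mathbb F_q$-polynomial, that polynomial would be irreducible over $\mathbb F_q$ of even degree and would therefore split into two factors over $\mathbb F_{q^2}$, contradicting the irreducibility of $r_m$ over $\mathbb F_{q^2}$. Thus $\overline{r_m}=s_m\neq r_m$, and the standard fact that the product of a pair of distinct Frobenius-conjugate $\mathbb F_{q^2}$-irreducibles is irreducible over $\mathbb F_q$ shows that $\kappa_m g_m$ is irreducible over $\mathbb F_q$ of degree $t^m n$ for all $m\geq 1$; the case $m=0$ is the hypothesis that $g$ itself is irreducible over $\mathbb F_q$. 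In short, the main obstacle is this even-degree descent: one must exhibit the two $\mathbb F_{q^2}$-factors of $g_m$ as a nontrivial conjugate pair, and it is precisely $4\mid nt$, by forcing $\deg r_m$ to be even, that guarantees the conjugation is nontrivial.
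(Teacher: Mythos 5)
Your proposal is correct, and its overall architecture coincides with the paper's: observe that $q^2\not\equiv 3\pmod 4$, apply Theorem \ref{mainthm} over $\mathbb F_{q^2}$ to $g$ (odd $n$) or to the factor $r$ (even $n$), and then descend. The odd case and the statements about $r_m$ are handled identically. Where you diverge is in the only nontrivial step, the descent of $g_m=r_m s_m$ to $\mathbb F_q$ when $n$ is even and $4\mid nt$. The paper picks a root $\alpha$ of $r_m$ and runs a degree count: $[\mathbb F_q(\alpha):\mathbb F_q]=nt^m/[\mathbb F_{q^2}(\alpha):\mathbb F_q(\alpha)]$ is either $nt^m$ or $nt^m/2$, both even because $4\mid nt$, whence $\mathbb F_{q^2}\subseteq\mathbb F_q(\alpha)$, the denominator is $1$, and the degree is $nt^m$. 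You instead argue at the level of the factorization, showing via the parity of $\deg r_m$ that $r_m$ cannot be (a scalar multiple of) an $\mathbb F_q$-polynomial, so $\overline{r_m}$ is a distinct conjugate factor and $g_m$ is, up to scalar, the norm $r_m\overline{r_m}$, hence irreducible over $\mathbb F_q$. The two mechanisms are logically equivalent ways of exploiting $4\mid nt$; the paper's is slightly shorter since it never needs to discuss Frobenius action on coefficients or distinctness of factors, while yours makes the structure $g_m=r_m\overline{r_m}$ explicit, which is arguably more informative. Two small points of care in your write-up: $P_\sigma$ is homogeneous under scalars and multiplicative, but not additive (its definition involves $\deg f$), so ``$\mathbb F_{q^2}$-linear'' is an overstatement, though you only use the scalar homogeneity and multiplicativity, which do hold; and you should not assume at the outset that $s_m$ is irreducible (the hypothesis on $\eta$ is only imposed on $r$) --- your argument in fact only needs that the irreducible $\overline{r_m}$ divides $g_m$ and is coprime to $r_m$, after which $\deg g_m\le nt^m$ forces $g_m$ to equal $r_m\overline{r_m}$ up to scalar, so the conclusion survives.
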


\begin{proof}
Since $q^2 \neq 3 \mod 4$, we can apply Theorem \ref{mainthm} with $q^2$ in place of $q$ to conclude that if $n$ is odd, all polynomials $g_m(x)$ are irreducible in $\mathbb F_{q^2}[x]$. Then, taking appropriate nonzero multiples, they are irreducible in $\F_q[x]$ as well. 

If $n$ is even, the same argument shows that  $r_m$ is irreducible in $\mathbb F_{q^2}[x]$ for all $m \geq 0$. Moreover,   $\deg(r_m)=(n/2)t^m$.

Assume $4|nt$. If $n$ is odd, we are done. Suppose $n$ is even. Let $\alpha$ be a root of $r_m$. Since $g_m=r_m\cdot s_m$, we have that  $\alpha$ is a root of $g_m$ as well. Then, 

\begin{align*}
[\F_q(\alpha):\F_q] & = \frac{[\F_{q^2}(\alpha):\F_{q^2}]\cdot [\F_{q^2}:\F_q]}{[\F_{q^2}(\alpha):\F_{q}(\alpha)]} \\
&= \frac{(n/2)t^m\cdot 2}{[\F_{q^2}(\alpha):\F_{q}(\alpha)]}\\
&= \frac{nt^m}{[\F_{q^2}(\alpha):\F_{q}(\alpha)]}
\end{align*}

Then $[\F_q(\alpha):\F_q]$ is either $nt^m$ or $nt^m/2$. In both cases, $[\F_q(\alpha):\F_q]$ is an even number, hence $\F_{q^2}\subseteq \F_q(\alpha)$. But then $\F_{q^2}(\alpha)=\F_{q}(\alpha)$ and we conclude that $[\F_q(\alpha):\F_q]=nt^m$, thus showing that a nonzero multiple of $g_m$ is irreducible over $\F_q$. 
\end{proof}

\subsection{McNay's transform}

Let $c \in \mathbb F_q$ be a non square. Assume $q$ is odd. For any $g(x) \in \mathbb F_q[x]$, McNay's transform is $$g^T_c(x)=(2x)^{\deg g} g \left( \frac{x^2+c}{2x}\right).$$

Choose $\lambda \in \mathbb F_{q^2}$ such that $\lambda^2=c$. Set
\begin{equation}\label{sigmaMcNay}
\sigma = \left[ \begin{array}{cc}
\lambda & \lambda \\
-1 & 1\\
\end{array}\right].
\end{equation}

Observing that $\sigma \cdot \infty = -\lambda$, we see that whenever $g(-\lambda) \neq 0$, we have that $$g^T_c(x)=(2\lambda)^{\deg g} g^{R_{\sigma,2}}(x).$$ Also, 
$$\eta(g;\lambda)=\frac{g(\lambda)}{g(-\lambda)}$$ differs from $g(\lambda)g(-\lambda)$ by a square in $\F_{q^2}$.  Applying Theorem \ref{quadraticit} with $t=2$ and $\sigma$ as in \eqref{sigmaMcNay}, we obtain the following result. 

\begin{theorem}\label{ThmMcNay}
Assume $q$ is odd. Let $g(x) \in \F_q[x]$ be an irreducible polynomial of degree $n$. Let $c\in \F_q$ and $\lambda \in \F_{q^2}$ be as above. Define $g_0=g$ and $g_{m}=(g_{m-1})_c^{T}$. If $n$ is even, let $r(x) \in \F_{q^2}[x]$ be an irreducible factor of $g(x)$ in $\F_{q^2}[x]$. 

If $n$ is odd, assume that $g(\lambda)g(-\lambda)$ is not a square in $\F_{q^2}$. If $n$ is even, assume that $r(\lambda)r(-\lambda)$ is  not a square in $\F_{q^2}$. Then, all polynomials in the sequence $g_m$ are irreducible in $\F_q[x]$
\end{theorem}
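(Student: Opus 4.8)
The plan is to verify that the pair $(\sigma,t)$ with $\sigma$ as in \eqref{sigmaMcNay} and $t=2$ meets the hypotheses of Theorem \ref{quadraticit}, and then to transport the conclusion from the $R_{\sigma,2}$-sequence to the McNay sequence. First I would record the elementary structural facts: $\det\sigma = 2\lambda \neq 0$ (since $q$ is odd and $c=\lambda^2\neq 0$ forces $\lambda\neq 0$), so $\sigma\in{\rm GL}_2(\mathbb F_{q^2})$; the displayed identity $g^T_c=(2\lambda)^{\deg g}\,g^{R_{\sigma,2}}$ shows, taking $\kappa=(2\lambda)^{\deg g}\in\mathbb F_{q^2}^*$, that $R_{\sigma,2}$ is defined over $\mathbb F_q$; and $2\mid q^2-1$ because $q$ is odd, so every prime factor of $t=2$ divides $q^2-1$.

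Next I would translate the square conditions. Since $\sigma\cdot\infty=-\lambda$, the requirement $g(\sigma\cdot\infty)\neq 0$ of Theorem \ref{quadraticit} amounts to $g(-\lambda)\neq 0$, and this is not an independent assumption: the failure $g(-\lambda)=0$ (resp.\ $r(-\lambda)=0$) would make $g(\lambda)g(-\lambda)$ (resp.\ $r(\lambda)r(-\lambda)$) equal to $0$, which is a square, contradicting the hypothesis. For $n$ even one also checks $r(\lambda)\neq 0$, using that the second $\mathbb F_{q^2}$-factor of $g$ is the Frobenius conjugate of $r$ together with $\lambda^q=-\lambda$, so that indeed $g(-\lambda)\neq 0$. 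As already computed in the preamble, $\eta(g;\sigma)=g(\lambda)/g(-\lambda)$ differs from $g(\lambda)g(-\lambda)$ by the square $g(-\lambda)^{-2}$, and the same computation with $r$ in place of $g$ gives $\eta(r;\sigma)=r(\lambda)/r(-\lambda)$. Hence ``$g(\lambda)g(-\lambda)$ (resp.\ $r(\lambda)r(-\lambda)$) is not a square in $\mathbb F_{q^2}$'' is exactly the condition that $\eta(g;\sigma)$ (resp.\ $\eta(r;\sigma)$) is not an $\ell$-th power for the only prime $\ell\mid t=2$. Finally, the parity requirement of Theorem \ref{quadraticit} is automatic here: when $n$ is odd the first alternative applies, and when $n$ is even we have $4\mid 2n=nt$, so the conclusion ``a nonzero multiple of $g_m$ is irreducible over $\mathbb F_q$'' holds in every case.

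It remains to pass from the $R_{\sigma,2}$-sequence, call it $\tilde g_m$, to which Theorem \ref{quadraticit} directly applies, to the McNay sequence $g_m$ defined by $g_m=(g_{m-1})^T_c$. The last step I would carry out is an induction showing $g_m=\kappa_m\tilde g_m$ for some $\kappa_m\in\mathbb F_{q^2}^*$: the transforms $T_c$ and $R_{\sigma,2}$ agree up to the nonzero scalar $(2\lambda)^{\deg}$, and since $R_{\sigma,2}$ is linear in its argument this scalar simply accumulates, the relation being applicable at each stage because $\tilde g_m(-\lambda)\neq 0$ (a non-vanishing already furnished by the claims in the proof of Theorem \ref{general}) and $g_m$ is a scalar multiple of $\tilde g_m$. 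Because irreducibility over $\mathbb F_q$ is invariant under multiplication by nonzero scalars and each $g_m$ lies in $\mathbb F_q[x]$, the statement ``a nonzero multiple of $g_m$ is irreducible over $\mathbb F_q$'' forces $g_m$ itself to be irreducible over $\mathbb F_q$; the invariance of roots under $M_{\sigma,\zeta}$ likewise transfers from Theorem \ref{quadraticit}. I expect the only genuinely delicate point to be this bookkeeping with the scalar $\lambda\notin\mathbb F_q$: although the intermediate identities live over $\mathbb F_{q^2}$, one must use that the polynomials $g_m$ remain over $\mathbb F_q$, so that the scalar relating $g_m$ to any irreducible $\mathbb F_q$-multiple of $\tilde g_m$ is actually forced into $\mathbb F_q$.
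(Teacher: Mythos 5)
Your proposal is correct and takes essentially the same route as the paper: Theorem \ref{ThmMcNay} is deduced directly from Theorem \ref{quadraticit} with $t=2$ and $\sigma$ as in \eqref{sigmaMcNay}, using the identities $g^T_c=(2\lambda)^{\deg g}\,g^{R_{\sigma,2}}$ and $\eta(g;\sigma)=g(\lambda)/g(-\lambda)$, the latter differing from $g(\lambda)g(-\lambda)$ by a square. You simply make explicit the hypothesis verifications (non-vanishing at $-\lambda$, the case $4\mid nt$, definedness over $\F_q$) and the scalar bookkeeping that the paper leaves implicit.
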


\noindent {\bf Example}. Assume in addition that $1-c$ is also not a square. Then, the polynomial $g_0(x)=x^2+2x+c$ is irreducible over $\F_q$. The factorization over $\F_{q^2}$ is given by $$g_0(x)=(x+1-\sqrt{1-c})(x+1+\sqrt{1-c})=:r(x)s(x).$$

Moreover,

$$r(\lambda)\cdot r(-\lambda)=2(1-c-\sqrt{1-c}).$$ 

Assume $2(1-c-\sqrt{1-c})$ is a square in $\F_{q^2}$. Since $\{1-c, \sqrt{1-c}\}$ is a basis of $\F_{q^2}$ as a vector space over $\F_q$, there are elements $a,b \in \F_q$ such that $$2(1-c-\sqrt{1-c})=(a(1-c)+b \sqrt{1-c})^2.$$ This leads to the equations
$$a^2(1-c)+b^2=2, \quad  2ab(1-c)=-2.$$

We deduce that $a^4(1-c)^3-2a^2(1-c)^2+1=0$. Since this is a quadratic equation for $a^2$, and we have that $a^2\in \F_q$, the corresponding discriminant must be a square in $\F_q$. The discriminant is 
$$-4(1-c)^2\cdot c(1-c).$$

Since $c$ and $1-c$ are not squares in $\F_q$, we have that $c(1-c)$ is a square in $\F_q$. Hence, the discriminant is a square in $\F_q$ if and only of $-1$ is a square in $\F_q$. This reasoning can be clearly reversed. We deduce that $r(\lambda)r(-\lambda)$ is a square in $\F_{q^2}$ if and only if $-1$ is a square in $\F_q$. 

In conclusion,  if $q\equiv 3 \mod 4$, we have that all polynomials $g_m$ starting with $g_0$ are irreducible. This consequence had been proven earlier by McNay. Moreover, Chapman shows that the roots of the $g_m$ are completely normal elements of $\F_{q^{2^{m+1}}}$ over $\F_q$ (see \cite{chapman}, Theorem 2).

\subsection{A transform related to Singer groups}

Let $c \in \mathbb F_q$ be such that the polynomial $x^2-x-c \in \mathbb F_q[x]$ is irreducible. Let $\theta \in \mathbb F_{q^2}$ be a root and $t\geq 2$.  Set 
$$f(x)=\frac{\theta(x + \theta^q)^t -\theta^q(x+\theta)^t}{\theta^q-\theta}, \quad  h(x)=\frac{(x+\theta)^t-(x+\theta^q)^t}{\theta^q-\theta}.$$

It is not hard to check that both polynomials belong to $\mathbb F_q[x]$. Let $Q_{c,t}(x)=f(x)/h(x)$. For a polynomial $g (x) \in \mathbb F_q[x]$ of degree $n$, set $$g^{Q_{c,t}}(x)=h(x)^n g\left(\frac{f(x)}{h(x)}\right).$$

A particular case of this transform has been studied in \cite{Panarioetal}. Setting $\sigma^\theta = \left[\begin{array}{cc}
\theta & -\theta^q \\
-1 & 1 
\end{array}\right],$ we have that $\sigma^\theta \cdot \infty=-\theta$. For any polynomial $g(x) \in \mathbb F_q[x]$ with $g(-\theta)\neq 0$, we have that 
\begin{equation}\label{Pan}
 g^{Q_{c,t}}(x)= (\theta-\theta^q)^{n(t-1)}g^{R_{\sigma^\theta,t}}(x).
\end{equation} Using this observation we deduce the following.

\begin{theorem}\label{ThmPanarioetal}
Let $g(x) \in \mathbb F_q[x]$ be an irreducible polynomial of degree $n$ such that $g(-\theta) \neq 0$. If $n$ is even, let $r(x)\in \F_{q^2}[x]$ be an irreducible factor of $g(x)$ in $\F_{q^2}[x]$. Assume that every prime factor of $t$ divides $q^2-1$.  Define $g_0=g$ and $g_{m}=g_{m-1}^{Q_{c,t}}$ for $m\geq 1$. 

If $n$ is odd (resp. if $n$ is even), assume that for all primes $\ell |t$, the element $\frac{g(-\theta^q)}{g(-\theta)}$ (resp. $\frac{r(-\theta^q)}{r(-\theta)}$) is not an $\ell$-th power in $\F_{q^2}$. Then, if $n$ is odd or if $4|nt$, all polynomials in the  sequence $g_m$ are irreducible in $\F_q[x]$. 

\end{theorem}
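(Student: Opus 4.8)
The plan is to deduce Theorem \ref{ThmPanarioetal} from Theorem \ref{quadraticit} applied to the matrix $\sigma^\theta$, using the identity \eqref{Pan} to pass back and forth between the transform $Q_{c,t}$ and the transform $R_{\sigma^\theta,t}$.

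First I would record the standing facts about $\sigma^\theta$. Since $x^2-x-c$ is irreducible over $\F_q$, its root $\theta$ lies in $\F_{q^2}\setminus \F_q$, so $\theta^q\neq \theta$ and $(\theta-\theta^q)^{n(t-1)}\in \F_{q^2}^*$. From \eqref{Pan}, for any $g\in \F_q[x]$ with $g(-\theta)\neq 0$ we have $g^{Q_{c,t}}(x)=(\theta-\theta^q)^{n(t-1)}g^{R_{\sigma^\theta,t}}(x)$; since $f,h\in \F_q[x]$ the left-hand side lies in $\F_q[x]$, so with $\kappa=(\theta-\theta^q)^{-n(t-1)}$ we obtain $\kappa\, g^{R_{\sigma^\theta,t}}\in \F_q[x]$, which is exactly the statement that $R_{\sigma^\theta,t}$ is defined over $\F_q$. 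Moreover $\sigma^\theta\cdot \infty=-\theta$, so $g(\sigma^\theta\cdot \infty)=g(-\theta)\neq 0$, and every prime factor of $t$ divides $q^2-1$ by hypothesis: these are the first hypotheses of Theorem \ref{quadraticit}.

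Next I would translate the non-power conditions. Substituting $a=\theta,\ b=-\theta^q,\ c=-1,\ d=1$ into the definition of $\eta$ gives $(-d/c)^n=1$, $b/d=-\theta^q$ and $a/c=-\theta$, whence $\eta(g;\sigma^\theta)=g(-\theta^q)/g(-\theta)$ and, in the even case, $\eta(r;\sigma^\theta)=r(-\theta^q)/r(-\theta)$. Thus the assumptions of Theorem \ref{ThmPanarioetal} are precisely the non-power hypotheses required by Theorem \ref{quadraticit}. When $n$ is even, the factorization $g=rs$ over $\F_{q^2}$ together with $g(-\theta)\neq 0$ forces $r(-\theta)\neq 0$, so the remaining hypothesis of Theorem \ref{quadraticit} holds as well.

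It then remains to match the two sequences. Let $\tilde g_0=g$ and $\tilde g_m=\tilde g_{m-1}^{R_{\sigma^\theta,t}}$ be the sequence governed by Theorem \ref{quadraticit}. Its proof (through Theorem \ref{general}) guarantees $\tilde g_m(\sigma^\theta\cdot \infty)=\tilde g_m(-\theta)\neq 0$ for every $m$ (in the even case this follows from the multiplicativity of $R_{\sigma^\theta,t}$ and the same statement for the factors $\tilde r_m,\tilde s_m$), so \eqref{Pan} is applicable at each step. Both $Q_{c,t}$ and $R_{\sigma^\theta,t}$ are homogeneous under nonzero scalar multiplication of the input, so an easy induction using \eqref{Pan} shows $g_m=\lambda_m\tilde g_m$ for suitable $\lambda_m\in \F_{q^2}^*$; in particular $g_m$ and $\tilde g_m$ share their roots, and $g_m(-\theta)\neq 0$ throughout, so the $Q_{c,t}$-iteration is well defined. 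Applying Theorem \ref{quadraticit} under the hypotheses $n$ odd or $4\mid nt$, some nonzero $\F_{q^2}$-multiple of $\tilde g_m$ is irreducible in $\F_q[x]$; since $g_m=\lambda_m\tilde g_m$ already lies in $\F_q[x]$, it differs from that irreducible polynomial by an element of $\F_q^*$ and is therefore itself irreducible in $\F_q[x]$. The root-invariance under the matrix \eqref{thematrix} likewise transfers because $g_m$ and $\tilde g_m$ have the same roots. The main obstacle I anticipate is not any single computation but the bookkeeping in this last step: one must verify that the non-vanishing $\tilde g_m(-\theta)\neq 0$ propagates along the whole sequence (so that the scalar identity \eqref{Pan} stays valid at every iteration), and then exploit the fact that $Q_{c,t}$ produces genuine elements of $\F_q[x]$ in order to upgrade the ``nonzero multiple is irreducible'' conclusion of Theorem \ref{quadraticit} to irreducibility of $g_m$ itself. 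Everything else is a direct verification of hypotheses.
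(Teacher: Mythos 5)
Your proposal is correct and follows essentially the same route as the paper: identify $\eta(g;\sigma^\theta)=g(-\theta^q)/g(-\theta)$ from $a=\theta$, $b=-\theta^q$, $c=-1$, $d=1$, use \eqref{Pan} to relate the $Q_{c,t}$- and $R_{\sigma^\theta,t}$-iterations up to nonzero scalars, and then invoke Theorem \ref{quadraticit}. The paper states this in two lines; your version merely spells out the bookkeeping (definedness over $\F_q$, non-vanishing at $-\theta$, propagation of the scalar factors) that the paper leaves implicit.
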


\begin{proof}
Since 
$$(\sigma^\theta)^{-1}\cdot \infty = 1, \quad \sigma^\theta\cdot 0=-\theta^q,$$ we have that $\eta(g;\sigma)=\frac{g(-\theta^q)}{g(-\theta)}.$ The  assertion then follows by equation \eqref{Pan} and an application of Theorem \ref{quadraticit} to $\sigma^\theta$ and  $t$.
\end{proof}

We consider $\mathbb F_q^*$ as a subgroup of ${\rm GL}_2(\mathbb F_q)$ through the embedding sending $a \in \mathbb F_q^* $  to $\left[\begin{array}{cc}
a & 0 \\
0 & a 
\end{array}\right]$. Let ${\rm PGL}_2(\mathbb F_q)= {\rm GL}_2(\mathbb F_q)/\mathbb F_q^*$ and consider the matrix
$A_c =\left[\begin{array}{cc}
0 & 1 \\
c & 1 
\end{array}\right].$ We denote by $D$ its order in ${\rm PGL}_2(\mathbb F_q)$. Since $D|q+1$, Theorem \ref{ThmPanarioetal} applies to the transform $f^{Q_{c,D}}$, thus providing a partial answer to Problem 1 in \cite{Panarioetal}, section 5.

\bibliographystyle{alpha}
\bibliography{Bib}
\end{document}